\newcommand{\re}{\mathbb{R}}
\newcommand{\co}{\mathbb{C}}
\newcommand{\cc}{\mathcal{C}}
\newcommand{\z}{\bar z}
\newcommand{\rp}{\mbox{Re}}
\newcommand{\boxx}{\rule{2.12mm}{3.43mm}}
\title{Removing Isolated Zeroes by Homotopy}
\author[Adam Coffman --- Ji\v r\'\i\ Lebl]{Adam Coffman --- Ji\v r\'\i\ Lebl}
\address
    {\textsc{Adam Coffman}\\ Department of Mathematical Sciences\\
  Purdue University Fort Wayne\\ Fort Wayne, IN 46805, USA}
\email{CoffmanA@pfw.edu}
\address{\textsc{Ji\v r\'\i\ Lebl}\\ Department of Mathematics\\ Oklahoma
  State University\\ Stillwater, OK 74078, USA}
\email{lebl@math.okstate.edu}
\thanks{The second author was in part supported by NSF grant DMS-1362337.}
\subjclass[2010]{Primary 57R45; Secondary 14P10, 31B25, 35J25, 41A29, 57R25,
  58K25, 58K45, 58K60}
\keywords{Isolated zero; semialgebraic map; singularities of
  differentiable mappings}
\newtheorem{thm}{Theorem}[section]
\newtheorem{prop}[thm]{Proposition}
\newtheorem{lem}[thm]{Lemma}
\newtheorem{cor}[thm]{Corollary}
\theoremstyle{definition}
\newtheorem{defn}[thm]{Definition}
\newtheorem{notation}[thm]{Notation}
\newtheorem{example}[thm]{Example}
\newtheorem{question}[thm]{Question}
\theoremstyle{remark}
\newtheorem{rem}[thm]{Remark}
\begin{document}

\begin{abstract}
Suppose that the inverse image of the zero vector by a continuous map
$f:{\mathbb R}^n\to{\mathbb R}^q$ has an isolated point $P$.  The
existence of a continuous map $g$ which approximates $f$ but is
nonvanishing near $P$ is equivalent to a topological property we call
``locally inessential,'' generalizing the notion of index zero for
vector fields, the $q=n$ case. We consider the problem of constructing
such an approximation $g$ and a continuous homotopy $F(x,t)$ from $f$
to $g$ through locally nonvanishing maps.  If $f$ is a semialgebraic
map, then there exists $F$ also semialgebraic.  If $q=2$ and $f$ is
real analytic with a locally inessential zero, then there exists a
H\"older continuous homotopy $F(x,t)$ which, for $(x,t)\ne(P,0)$, is
real analytic and nonvanishing.  The existence of a smooth homotopy,
given a smooth map $f$, is stated as an open question.
\end{abstract}

\maketitle

\section{Introduction}\label{sec0b}

For a continuous vector field on a manifold, it is well-known that an
isolated zero can be removed by a small, local perturbation if and
only if that zero has an ``index'' equal to $0$.  That is, for
a vector field $\mathbf f$ vanishing with index $0$ at $\vec p$, 
and any small neighborhood of $\vec p$, there
is another vector field $\mathbf g$ agreeing with $\mathbf f$ outside that neighborhood, and arbitrarily
$\cc^0$-close to $\mathbf f$ but nonvanishing inside it.  In fact, the zero
is removable in the following stronger, but less well-known, sense (\cite{d84}):
not only is there such a perturbation $\mathbf g$, there is a continuous 
homotopy $F(\vec x,t)$ from $\mathbf
f$ to $\mathbf g$ such that $F(\vec x,t)$ is nonvanishing for $\vec x$
near $\vec p$ and $t>0$.  So, the isolated zero can be removed
instantaneously.

We consider the generalization of this phenomenon to other dimensions:
locally, maps ${\mathbf f}:{\mathbb R}^n\to{\mathbb R}^q$, but our
primary interest is in the regularity of the homotopy $F$.  One
version of our Main Question \ref{question4.3} asks: given that
$\mathbf f$ is smooth, does there exist a smooth homotopy $F$ that
instantly removes an isolated zero, assuming only that there is no
topological obstruction to a nonvanishing approximation?  This remains
open; as remarked by \cite{d83b}, \cite{d84}, just $\cc^1$ regularity
for $F$ seems to be a difficult question even under strong assumptions
about $\mathbf f$.  Our main results consider cases where $\mathbf f$
is semialgebraic (Theorem \ref{thm0.7}) or real analytic (Theorem
\ref{thm5.1}).

When the target dimension $q$ is not equal to $n$, isolated zeros are
no longer a generic phenomenon, but in Section \ref{sec2}, we show
that with a natural generalization of the notion of ``index zero''
(Definition \ref{def3.2}), the analogous nonvanishing approximation
property holds.  Instead of considering $\mathbf f$ as a vector field,
another way to visualize an isolated zero of $\mathbf f:\re^n\to\re^q$
is to consider the zero sets of its components $(f_1(\vec
x),\ldots,f_q(\vec x))$.  For $q$ suitably smooth and generic
functions, each component vanishes on some hypersurface, and for $q\le
n$, the intersection of $q$ hypersurfaces in general position is
expected to be a set with dimension $n-q$.  So for $q<n$, an isolated
point in the intersection indicates that the hypersurfaces are not in
general position, but our question is about the persistence of the
isolated zero: is there some perturbation so that the isolated point
of intersection disappears, or do the $q$ hypersurfaces continue to
have a non-empty, but not necessarily isolated, intersection after any
perturbation?  In the case $n=3$, $q=2$ (of special interest for
applications such as computer graphics), can two real implicit
surfaces $\{f_1=0\}$, $\{f_2=0\}$, meeting at just one point (e.g.,
two cones sharing a vertex; for another example, see Section
\ref{sec5}), always be made disjoint by small changes in $f_1$ and
$f_2$?  The answer is yes, and ``small changes'' can be interpreted as
either a local $\cc^0$ approximation, or a continuous family of such
approximations (the homotopy $F$).

Our first approach to the construction of $F$ is to start with the
local approximation; the results in Section \ref{sec2} generalize
well-known facts about the vector field case ($q=n$).  Then in Section
\ref{sec3} we construct the continuous homotopy; the $q=n$ case was
considered by \cite{d84}; our construction for any $q$ in Theorem
\ref{thm0.7} is explicit enough so that the homotopy $F$ is
semialgebraic if $\mathbf f$ is.

The second approach, in Section \ref{sec4}, considers the case where
$q=2$ and $\mathbf f$ is real analytic; the construction of Theorem
\ref{thm5.1} uses PDE methods (only the classical Dirichlet problem,
with H\"older estimates up to the boundary) to construct a continuous
nonvanishing homotopy $F$ which is real analytic except at the point
$(\vec p,0)$, near which it satisfies an inequality of the form
$\|F(\vec x,t)\|\le C\|(\vec x-\vec p,t)\|$.  Either better regularity
or a counterexample would be interesting: is there some polynomial map
with an isolated zero that can be removed by a semialgebraic homotopy
but not by a $\cc^1$ homotopy?

Time-dependent vector fields are of obvious importance in
applications, and the behavior of their zeros remains a topic of
current interest (\cite{NNPV}).  We further remark that the global
problem of finding a homotopy from a map $\mathbf f$ to another map
with fewer zeros has been considered in algebraic topology
(\cite{bfgj}, \cite{f}).  The topic of ``root theory'' is a special
case of the coincidence problem of finding homotopies from two maps
$\mathbf f$ and $\mathbf g$ at time $0$ to two other maps at time $1$
with disjoint images (or with a minimal number of points of
intersection).  Our results in Sections \ref{sec3} and \ref{sec4} are
different, in that we want to find a homotopy where the image of
$\mathbf f$ becomes disjoint from $\{\mathbf0\}$ for all $t>0$.

Finally, we mention that our interest in this topic started with an
analogue in CR geometry --- \cite{e2} considers a real $3$-manifold
embedded in $\co^3$ with an isolated complex tangent, and describes a
local topological obstruction to the existence of a $\cc^1$ homotopy
to a totally real embedding.

\section{Notation}\label{sec1}

Fix a positive integer $n$.  We are interested in maps $\mathbf f$
from an $n$-manifold to another manifold, where the inverse image of a
point $O$ contains an isolated point.  Our analysis is local, not
global, so we can consider the target manifold to be $\re^q$ for some
$q$ and the point $O$ to be the zero vector $\mathbf0$; then an
isolated point of ${\mathbf f}^{-1}(\{O\})$ is called an isolated zero
of $\mathbf f$ (Definition \ref{def3.1}).  We consider the domain of
$\mathbf f$ and its perturbations to be a neighborhood $\Omega$ in
$\re^n$ rather than a more general $n$-manifold.

\begin{notation}\label{not0.1}
  For $\vec c\in\re^n$ and $R>0$, the following notation is used for
  the standard Euclidean balls and spheres with center $\vec c$ and
  radius $R$:
\begin{eqnarray*}
  B^n(\vec c,R)&=&\{\vec x:\|\vec x-\vec c\|<R\}=\mbox{ the open ball}\\
  {\overline{B}}^n(\vec c,R)&=&\{\vec x:\|\vec x-\vec c\|\le R\}=\mbox{ the closed ball}\\
  B^n_*(\vec c,R)&=&\{\vec x:0<\|\vec x-\vec c\|<R\}=\mbox{ the punctured open ball}\\
  {\overline{B}}^n_*(\vec c,R)&=&\{\vec x:0<\|\vec x-\vec c\|\le R\}=\mbox{ the punctured closed ball}\\
  S^{n-1}(\vec c,R)&=&\{\vec x:\|\vec x-\vec c\|=R\}=\mbox{ the boundary sphere}
\end{eqnarray*}
  Similarly, for $\re^{n+1}$ with coordinates $(\vec x,t)$, denote the upper half-ball and upper hemisphere:
\begin{eqnarray*}
  B^{n+1}_+((\vec c,\tau),R)&=&\{(\vec x,t):\|(\vec x,t)-(\vec c,\tau)\|<R, t>\tau\}\\
  S^{n}_+((\vec c,\tau),R)&=&\{(\vec x,t):\|(\vec x,t)-(\vec c,\tau)\|=R, t>\tau\}
\end{eqnarray*}
  The usual unit sphere $S^{q-1}(\vec0,1)$ in $\re^q$ will be
  abbreviated $S^{q-1}$.  The restriction of a map ${\mathbf
    f}:\Omega\to\re^q$ to some sphere $S^{n-1}(\vec
  c,r)\subseteq\Omega$ will be denoted ${\mathbf f}|_S$.
\end{notation}
\begin{notation}\label{not0.2}
  For a map ${\mathbf f}:\Omega\to\re^q$
  with components ${\mathbf f}(\vec x)=(f_1(\vec x),\ldots,f_q(\vec
  x))$, denote the locus: $$V({\mathbf f})=V(f_1,\ldots,f_q)=\{\vec
  x\in \Omega:f_1(\vec x)=0,\ldots,f_q(\vec x)=0\}={\mathbf
  f}^{-1}(\{{\mathbf0}\}).$$
\end{notation}
\begin{defn}\label{def3.1}
  For a function ${\mathbf f}:\Omega\to\re^q$ and a point $\vec
  p\in\Omega$ such that ${\mathbf f}(\vec p)={\mathbf0}$, $\vec p$ is
  an {\underline{isolated}} zero of $\mathbf f$ means that there
  exists some $R>0$ so that $B^n(\vec p,R)\subseteq\Omega$ and:
  \begin{equation*}\label{eq1}
    V({\mathbf f})\cap B^n(\vec p,R)=\{\vec p\}.
  \end{equation*}
\end{defn}
\begin{defn}\label{def1.3}
  A subset $\Sigma\subseteq\re^N$ is a {\underline{semialgebraic set}}
  means that $\Sigma$ is a finite union of sets of the form $$\{\vec
  x\in\re^N:P(\vec x)=0,Q_1(\vec x)>0,\ldots,Q_j(\vec x)>0\},$$ where
  each of the functions $P$, $Q_1$, \ldots, $Q_j$ is a polynomial
  $\re^N\to\re^1$.  For an open domain $\Omega\subseteq\re^n$ as above, a function ${\mathbf f}:\Omega\to\re^{q}$ is a
  {\underline{semialgebraic map}} means that the graph of $\mathbf f$
  is a semialgebraic set in $\re^{n+q}$.
\end{defn}
\begin{rem}\label{rem1.4}
  Some references require that semialgebraic maps are also continuous;
  it is more convenient for us to instead allow discontinuous maps and
  explicitly mention continuity when needed.
\end{rem}
\begin{prop}\label{prop1.4}
  A linear projection of a semialgebraic set is a semialgebraic set.
  The scalar valued semialgebraic maps $f:\re^{n}\to\re^1$ form a ring.  A map
  ${\mathbf f}:\re^n\to\re^q$ is semialgebraic if and only if its
  components, ${\mathbf f}=(f_1,\ldots,f_q)$, are semialgebraic
  $f_j:\re^n\to\re^1$.  The composite of semialgebraic maps is
  semialgebraic.  \boxx
\end{prop}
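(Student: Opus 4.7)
The crux of the proposition is the first assertion, the Tarski--Seidenberg theorem, that projections of semialgebraic sets are semialgebraic. I would not attempt to reprove this from scratch; it is a standard result whose proof (for instance by quantifier elimination over real closed fields, or by cylindrical algebraic decomposition) can be cited from a textbook such as Bochnak--Coste--Roy. Everything else in the proposition then follows by routine manipulation of graphs, using the obvious stability of semialgebraic sets under finite unions, finite intersections, and complements (the latter by distributing basic sets and handling each atomic condition).

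For the equivalence in the third assertion, observe that if $\mathbf{f}=(f_1,\ldots,f_q)$ has a semialgebraic graph in $\re^{n+q}$, then the graph of each component $f_i$ is the linear projection onto the coordinates $(x_1,\ldots,x_n,y_i)$, hence is semialgebraic by the projection theorem. Conversely, given semialgebraic graphs $\Gamma_i\subseteq\re^{n+1}$ for the $f_i$, I would lift each to $\tilde\Gamma_i\subseteq\re^{n+q}$ by taking the product with $\re$ in the remaining $q-1$ coordinates; each $\tilde\Gamma_i$ is manifestly semialgebraic, since the defining polynomial conditions on $(x,y_i)$ transcribe verbatim to conditions on $\re^{n+q}$. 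Then $\mathrm{graph}(\mathbf{f})=\bigcap_{i=1}^q\tilde\Gamma_i$ is a finite intersection of semialgebraic sets and therefore semialgebraic.

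For the ring structure, I would exhibit $\mathrm{graph}(f+g)$ and $\mathrm{graph}(fg)$ as projections. For addition, $\mathrm{graph}(f+g)$ is the image under the projection $(x,z,y_1,y_2)\mapsto(x,z)$ of
\[
\{(x,z,y_1,y_2)\in\re^{n+3}:z=y_1+y_2,\ (x,y_1)\in\Gamma_f,\ (x,y_2)\in\Gamma_g\},
\]
which is a finite intersection of semialgebraic sets; replacing $z=y_1+y_2$ by $z=y_1y_2$ handles multiplication, and constants are trivially semialgebraic, giving closure under all ring operations. The composition statement is analogous: if $\mathbf{f}:\re^n\to\re^q$ and $\mathbf{g}:\re^q\to\re^r$ are semialgebraic, then $\mathrm{graph}(\mathbf{g}\circ\mathbf{f})$ is the projection of $(\Gamma_{\mathbf{f}}\times\re^r)\cap(\re^n\times\Gamma_{\mathbf{g}})\subseteq\re^{n+q+r}$ to $\re^{n+r}$.

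The only genuine obstacle is Tarski--Seidenberg itself; granting it, the rest is essentially bookkeeping. Since the proposition is capped by the QED box immediately after the statement, the authors clearly intend exactly this: a pointer to the standard theory, with the elementary graph-manipulations left to the reader.
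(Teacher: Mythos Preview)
Your proposal is correct and matches the paper's intent: the authors give no proof at all (the \boxx\ follows the statement directly), and the subsequent Remark~\ref{rem1.6} simply declares the claims well-known with a citation to \cite{bcr}. Your sketch, deriving the remaining assertions from Tarski--Seidenberg by graph manipulations, is exactly the standard bookkeeping the reader is expected to supply.
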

\begin{rem}\label{rem1.6}
  The notions of Definition \ref{def1.3} and the claims of Proposition
  \ref{prop1.4} are well-known (we refer to \cite{bcr}).  For an open domain $\Omega$,
  it follows
  from Definition \ref{def1.3} and the projection property that if ${\mathbf f}:\Omega\to\re^q$ is
  a semialgebraic map, then $\Omega$ must be an open semialgebraic
  set.
\end{rem}

\section{Nonvanishing approximation}\label{sec2}

Theorem \ref{thm0.3} and its Corollaries are about approximating a map
with an isolated zero by a nonvanishing map.  The case $q=n$ is the
well-known situation of a vector field with index zero, for which
\cite{st}, \cite{ankerthesis}, \cite{anker}, prove results similar to
Theorem \ref{thm0.3} and additionally give estimates for derivatives
(see also \cite{pp}).  For $q\le n$, the connection between zero sets of
perturbations and homotopy classes of maps of spheres was used
by \cite{d83} to study the stability of non-isolated zero sets, rather
than the removability of isolated zeros.

Definition \ref{def3.2} generalizes the notion of ``index zero'' to
other dimensions.  For this Section, fix a positive integer $q$.
\begin{lem}\label{lem3.2}
  For a continuous map ${\mathbf f}:\Omega\to\re^q$, and an isolated
  zero $\vec p\in B^n(\vec p,R)$ as in Definition \ref{def3.1}, the
  following are equivalent:
  \begin{enumerate}[\rm (i)]
    \item There exists $r\in(0,R)$ such that the restriction of
      ${\mathbf f}$ to the domain $S^{n-1}(\vec p,r)$ and target
      $\re^q\setminus\{{\mathbf0}\}$ is null-homotopic;
    \item For any $r\in(0,R)$, the restriction of ${\mathbf f}$ to the
      domain $S^{n-1}(\vec p,r)$ and target $\re^q\setminus\{{\mathbf0}\}$
      is null-homotopic;
          \item For any $\epsilon>0$ there is some $\delta$ so that
            $0<\delta\le R$ and for any $r\in(0,\delta)$, the
            restriction of ${\mathbf f}$ to the domain $S^{n-1}(\vec p,r)$
            and target $B^q_*({\mathbf0},\epsilon)$ is null-homotopic;
            \item For any $\epsilon>0$ there is some $\delta$ so that
              $0<\delta\le R$ and for any $r\in(0,\delta)$, the
              restriction of ${\mathbf f}$ to the domain $B^n_*(\vec
              p,r)$ and target $B^q_*({\mathbf0},\epsilon)$ is
              null-homotopic.
  \end{enumerate}
\end{lem}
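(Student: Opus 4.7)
The plan is to prove (i)$\Leftrightarrow$(ii), (iii)$\Rightarrow$(i) (trivially), (ii)$\Rightarrow$(iii), and (iii)$\Leftrightarrow$(iv); together these give all four equivalences. Two elementary tools carry most of the argument. First, since $\mathbf{f}$ is nonvanishing on the punctured ball $B^n_*(\vec p,R)$, any continuous deformation of the domain that stays in $B^n_*(\vec p,R)$, composed with $\mathbf{f}$, has image in $\re^q\setminus\{\mathbf{0}\}$. Second, for each $\epsilon>0$ the radial contraction
\[
\phi_\epsilon(\vec y)=\vec y\cdot\min\bigl\{1,\,(\epsilon/2)/\|\vec y\|\bigr\}
\]
is a continuous retraction $\re^q\setminus\{\mathbf{0}\}\to\overline{B}^q(\mathbf{0},\epsilon/2)\setminus\{\mathbf{0}\}\subset B^q_*(\mathbf{0},\epsilon)$ that is the identity on $\overline{B}^q(\mathbf{0},\epsilon/2)$.

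For (i)$\Leftrightarrow$(ii), given $0<r_1,r_2<R$, the rescaling $H(\vec x,s)=\mathbf{f}\bigl(\vec p+((1-s)+sr_2/r_1)(\vec x-\vec p)\bigr)$ is a continuous homotopy $S^{n-1}(\vec p,r_1)\times[0,1]\to\re^q\setminus\{\mathbf{0}\}$ from $\mathbf{f}|_{S^{n-1}(\vec p,r_1)}$ to the composition of $\mathbf{f}|_{S^{n-1}(\vec p,r_2)}$ with the radial homeomorphism $S^{n-1}(\vec p,r_1)\to S^{n-1}(\vec p,r_2)$; since the null-homotopy property is preserved by precomposition with a continuous map, (i) and (ii) are equivalent. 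Also, (iii)$\Rightarrow$(i) is immediate from $B^q_*(\mathbf{0},\epsilon)\subset\re^q\setminus\{\mathbf{0}\}$.

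The main step is (ii)$\Rightarrow$(iii), where I must upgrade a possibly large-image null-homotopy in $\re^q\setminus\{\mathbf{0}\}$ to one confined to $B^q_*(\mathbf{0},\epsilon)$ without changing the time-$0$ map. Given $\epsilon>0$, continuity of $\mathbf{f}$ at $\vec p$ (where $\mathbf{f}(\vec p)=\mathbf{0}$) yields $\delta\in(0,R]$ with $\mathbf{f}(B^n(\vec p,\delta))\subseteq\overline{B}^q(\mathbf{0},\epsilon/2)$. For $r\in(0,\delta)$, apply (ii) to get a null-homotopy $H:S^{n-1}(\vec p,r)\times[0,1]\to\re^q\setminus\{\mathbf{0}\}$; then $\phi_\epsilon\circ H$ is a null-homotopy into $B^q_*(\mathbf{0},\epsilon)$, and its time-$0$ map is $\phi_\epsilon\circ\mathbf{f}|_{S^{n-1}(\vec p,r)}=\mathbf{f}|_{S^{n-1}(\vec p,r)}$ because the restriction already lands in $\overline{B}^q(\mathbf{0},\epsilon/2)$.

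Finally, for (iii)$\Leftrightarrow$(iv): the direction (iv)$\Rightarrow$(iii) follows by restriction, since any $S^{n-1}(\vec p,r')$ with $0<r'<\delta$ is contained in $B^n_*(\vec p,r'')$ for every $r''\in(r',\delta)$, and a null-homotopy of $\mathbf{f}|_{B^n_*(\vec p,r'')}$ in $B^q_*(\mathbf{0},\epsilon)$ restricts to one of $\mathbf{f}|_{S^{n-1}(\vec p,r')}$. For (iii)$\Rightarrow$(iv), given $\epsilon>0$ further shrink $\delta$ so that $\mathbf{f}(B^n(\vec p,\delta))\subseteq B^q(\mathbf{0},\epsilon)$; for $r\in(0,\delta)$, the strong deformation retract
\[
\rho_s(\vec x)=\vec p+\bigl((1-s)\|\vec x-\vec p\|+s(r/2)\bigr)(\vec x-\vec p)/\|\vec x-\vec p\|
\]
carries $B^n_*(\vec p,r)$ into itself for each $s$ (the new radius is a convex combination of two numbers in $(0,r)$) and at $s=1$ projects onto $S^{n-1}(\vec p,r/2)$. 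Composing with $\mathbf{f}$ gives a homotopy in $B^q_*(\mathbf{0},\epsilon)$ from $\mathbf{f}|_{B^n_*(\vec p,r)}$ to $\mathbf{f}|_{S^{n-1}(\vec p,r/2)}\circ\rho_1$; by (iii) (applied with $r/2<\delta$) the latter is null-homotopic in $B^q_*(\mathbf{0},\epsilon)$, hence so is $\mathbf{f}|_{B^n_*(\vec p,r)}$. I expect no substantive obstacle beyond the bookkeeping of nested radii and of keeping each homotopy inside its prescribed punctured ball.
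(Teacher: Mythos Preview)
Your proof is correct and follows the same chain of implications as the paper, with the same underlying idea of radial deformations in the domain together with scaling in the target. The one noteworthy difference is your step (ii)$\Rightarrow$(iii): the paper shrinks the given null-homotopy into $B^q_*(\mathbf{0},\epsilon)$ by multiplying it by an explicit piecewise-linear, time-dependent scalar $\gamma(t)$, with a case split according to whether the homotopy's image already fits in the $\epsilon$-ball; your post-composition with the radial retraction $\phi_\epsilon$ achieves the same compression in one line and avoids the case analysis, since $\phi_\epsilon$ fixes the time-$0$ slice by the choice of $\delta$. The remaining steps (radial homotopy between spheres for (i)$\Leftrightarrow$(ii), and deformation retract of the punctured ball onto a sphere for (iii)$\Leftrightarrow$(iv)) match the paper's constructions up to cosmetic differences in the choice of intermediate radius.
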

\begin{proof}
  The implication (iv)$\implies$(i) is obvious.  The statement of
  Property (i) is the most easily checked, while the statements of
  Properties (iii) and (iv) are local both in the domain and the target,
  and could be adapted for isolated roots of a map from one manifold
  to another.

  To show that (i)$\implies$(ii), let $r_0$ be the radius from
  Property (i), so that there exists a homotopy $\Phi:S^{n-1}(\vec
  p,r_0)\times[0,1]\to\re^q\setminus\{{\mathbf0}\}$ from $\Phi(\vec
  x,0)={\mathbf f}|_S(\vec x)$ to a constant map $\Phi(\vec
  x,1)\equiv{\mathbf c}$ in $\re^q\setminus\{{\mathbf0}\}$.  For any
  $r\in(0,R)$, define $\Phi_r:S^{n-1}(\vec
  p,r)\times[0,1]\to\re^q\setminus\{{\mathbf0}\}$ by the formula
  $$\Phi_r(\vec x,t)=\left\{\begin{array}{cc}{\mathbf f}(\vec
  p+\left(1+2\left(\frac{r_0}{r}-1\right)t\right)\cdot(\vec x-\vec
  p))&0\le t\le\frac12\\\Phi(\vec p+\frac{r_0}r\cdot(\vec x-\vec
  p),2t-1)&\frac12\le t\le1\end{array}\right\}.$$ Then $\Phi_r$ is a
  continuous homotopy from $\Phi_r(\vec x,0)={\mathbf f}|_S(\vec x)$ to
  the constant map $\Phi_r(\vec x,1)\equiv{\mathbf c}$ in
  $\re^q\setminus\{{\mathbf0}\}$.

  Assuming (ii), for any $\epsilon>0$ there is, by the continuity of
  $\mathbf f$, some $\delta$, $0<\delta\le R$, such that if $\|\vec
  x-\vec p\|<\delta$ then $\|{\mathbf f}(\vec x)-{\mathbf f}(\vec
  p)\|<\epsilon$.  For any $r\in(0,\delta)$, the image ${\mathbf
  f}(S^{n-1}(\vec p,r))$ is contained in $B_*^q({\mathbf0},\epsilon)$.
  By Property (ii), ${\mathbf f}|_S$ is homotopic in
  $\re^q\setminus\{{\mathbf0}\}$ to a constant map; the claim of
  Property (iii) is just that it is also homotopic in
  $B_*^q({\mathbf0},\epsilon)$ to a (possibly different) constant map.

  Let $\Phi_r:S^{n-1}(\vec p,r)\times[0,1]\to\re^q\setminus\{{\mathbf0}\}$
  be the homotopy from Property (ii), so that $\Phi_r(\vec
  x,0)={\mathbf f}|_S(\vec x)$ and $\Phi_r(\vec
  x,1)\equiv{\mathbf{c}_r}\in\re^q\setminus\{{\mathbf0}\}$.  By the
  continuity of $\Phi_r$ and the compactness of $S^{n-1}(\vec p,r)$, there
  is some $\delta_2$, $0<\delta_2\le1$, so that the image
  $\Phi_r(S^{n-1}(\vec p,r)\times[0,\delta_2])$ is contained in
  $B^q_*({\mathbf0},\epsilon)$.  Also, $\Phi_r$ achieves a maximum
  magnitude $M=\max\{\|\Phi_r(\vec x,t)\|:(\vec x,t)\in S^{n-1}(\vec
  p,r)\times[0,1]\}>0$.  If $\delta_2=1$ or $M<\epsilon$, Property (iii)
  already holds; otherwise, for $0<\delta_2<1$ and $M\ge\epsilon$,
  define the following weakly decreasing, continuous function
  $\gamma:[0,1]\to(0,1]$:
  $$\gamma(t)=\left\{\begin{array}{cc}\frac{\frac{\epsilon}{2M}-1}{\delta_2}t+1&0\le
    t\le\delta_2\\\frac{\epsilon}{2M}&\delta_2\le
    t\le1\end{array}\right\}.$$ Then $\Psi(\vec
    x,t)=\gamma(t)\cdot\Phi_r(\vec x,t)$ is a continuous homotopy from
    $\Psi(\vec x,0)=1\cdot\Phi_r(\vec x,0)={\mathbf f}(\vec x)$ to the
    constant map $\Psi(\vec
    x,1)=\frac{\epsilon}{2M}\cdot{\mathbf{c}_r}\in
    B^q_*({\mathbf0},\epsilon)$.  For $0\le t\le\delta_2$,
    $\|\Psi(\vec x,t)\|=\gamma(t)\|\Phi_r(\vec x,t)\|<1\cdot\epsilon$,
    and for $\delta_2\le t\le 1$, $\|\Psi(\vec
    x,t)\|=\frac{\epsilon}{2M}\|\Phi_r(\vec
    x,t)\|\le\frac{\epsilon}{2}$, which establishes Property (iii).

  Assuming (iii), for any $\epsilon>0$ there is, by the continuity of
  $\mathbf f$, some $\delta_1$, $0<\delta_1\le R$, such that if
  $\|\vec x-\vec p\|<\delta_1$ then $\|{\mathbf f}(\vec x)-{\mathbf
    f}(\vec p)\|<\epsilon$.  There is, from Property (iii), some
  $\delta_2>0$ corresponding to the same $\epsilon$; the claimed
  $\delta$ will be $\min\{\delta_1,\delta_2\}$.  Let $r$ be any radius
  in $(0,\delta)$, and then there is a homotopy $\Psi_r:S^{n-1}(\vec
  p,r)\times[0,1]\to B_*^q({\mathbf0},\epsilon)$ from ${\mathbf
    f}|_S$ to a constant ${\mathbf c}$ in
  $B_*^q({\mathbf0},\epsilon)$. The map $\Theta:B_*^n(\vec
  p,r)\times[0,1]\to B_*^q({\mathbf0},\epsilon)$, defined by the
  following formula, is a homotopy in $B_*^q({\mathbf0},\epsilon)$ as
  claimed in Property (iv):
      $$\Theta(\vec x,t)=\left\{\begin{array}{cc}{\mathbf f}(\vec
  p+\left(\frac{2(r-\|\vec x-\vec p\|)t+\|\vec x-\vec p\|}{\|\vec
    x-\vec p\|}\right)\cdot(\vec x-\vec p))&0\le
  t\le\frac12\\\Psi_r(\vec p+\frac{r}{\|\vec x-\vec p\|}\cdot(\vec
  x-\vec p),2t-1)&\frac12\le t\le1\end{array}\right\}.$$
\end{proof}
\begin{defn}\label{def3.2}
  For a continuous map ${\mathbf f}:\Omega\to\re^q$, a point $\vec p$
  such that ${\mathbf f}(\vec p)={\mathbf0}$ is a {\underline{locally
      inessential}} zero of $\mathbf f$ means: $\vec p$ is an isolated
  zero and any of the equivalent properties of Lemma \ref{lem3.2} is
  satisfied.
\end{defn}
\begin{rem}\label{rem3.4}
  For the case $q=n$, the above notion is exactly that the isolated
  zero of $\mathbf f$ has Poincar\'e-Hopf index $0$.  The new term
  ``locally inessential zero'' could be replaced by just using
  ``isolated zero with index $0$'' more generally for any $q$, $n$,
  but such usage would raise the unrelated (for us) question of
  whether or how one could define a non-zero index for $q\ne n$.
\end{rem}

\begin{lem}\label{lem3.5}
  If $n$ and $q$ satisfy \begin{equation*}
    \pi_{n-1}(S^{q-1})\cong\{0\},
  \end{equation*}
  then any isolated zero of any continuous map ${\mathbf
    f}:\Omega\to\re^q$ is a locally inessential zero.
\end{lem}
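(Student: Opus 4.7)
My plan is to verify Property (i) of Lemma \ref{lem3.2}, which is the most directly tractable of the four equivalent characterizations. By Definition \ref{def3.1}, there exists $R>0$ such that $\mathbf f$ is nonvanishing on the punctured ball $B_*^n(\vec p,R)$, so for any $r\in(0,R)$ the restriction $\mathbf f|_S:S^{n-1}(\vec p,r)\to\re^q\setminus\{\mathbf 0\}$ is well-defined and continuous, and it suffices to produce a null-homotopy of this restriction in the target $\re^q\setminus\{\mathbf 0\}$.

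The natural strategy is to factor the problem through the unit sphere $S^{q-1}$. The radial deformation retraction $\vec v\mapsto \vec v/\|\vec v\|$ of $\re^q\setminus\{\mathbf 0\}$ onto $S^{q-1}$ supplies, via the straight-line homotopy $(1-s)\,\mathbf f|_S(\vec x)+s\,\mathbf f|_S(\vec x)/\|\mathbf f|_S(\vec x)\|$, a homotopy in $\re^q\setminus\{\mathbf 0\}$ from $\mathbf f|_S$ to the normalized map $\hat{\mathbf f}=\mathbf f|_S/\|\mathbf f|_S\|$ taking values on $S^{q-1}$; this homotopy avoids $\mathbf 0$ because both endpoints lie on a common ray out of the origin. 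Using the homeomorphism $\vec u\mapsto\vec p+r\vec u$ identifying $S^{n-1}$ with $S^{n-1}(\vec p,r)$, the map $\hat{\mathbf f}$ represents a free homotopy class in $[S^{n-1},S^{q-1}]$. By hypothesis $\pi_{n-1}(S^{q-1})\cong\{0\}$; since the set of free homotopy classes $[S^{n-1},S^{q-1}]$ is the quotient of $\pi_{n-1}(S^{q-1})$ by the natural $\pi_1(S^{q-1})$-action (and in the borderline case $n=1$ the hypothesis just says $S^{q-1}$ is path-connected, whence so is $\re^q\setminus\{\mathbf 0\}$), the triviality of $\pi_{n-1}(S^{q-1})$ forces $[S^{n-1},S^{q-1}]$ to be a singleton as well. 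Thus $\hat{\mathbf f}$ is null-homotopic as a map into $S^{q-1}$; post-composing this null-homotopy with the inclusion $S^{q-1}\hookrightarrow\re^q\setminus\{\mathbf 0\}$ and concatenating with the first homotopy yields the required null-homotopy of $\mathbf f|_S$, which by Definition \ref{def3.2} shows that $\vec p$ is a locally inessential zero.

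The argument is essentially unobstructed; the only subtle point worth flagging is the conversion from based homotopy (the content of the hypothesis on $\pi_{n-1}$) to free homotopy (the form in which Property (i) of Lemma \ref{lem3.2} is stated for $\mathbf f|_S$). This is handled cleanly by the observation that a trivial group admits only trivial orbits under any action, so no basepoint bookkeeping is needed.
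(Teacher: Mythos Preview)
Your proof is correct and follows essentially the same approach as the paper: both verify Property (i) of Lemma \ref{lem3.2} by observing that $\re^q\setminus\{\mathbf0\}$ has the homotopy type of $S^{q-1}$ (the paper via the homeomorphism $\re^q\setminus\{\mathbf0\}\cong S^{q-1}\times(0,1)$, you via the radial deformation retraction) and then invoking the hypothesis $\pi_{n-1}(S^{q-1})\cong\{0\}$. Your treatment is slightly more explicit in handling the passage from based to free homotopy classes, a point the paper leaves implicit.
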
  
\begin{proof}
  For $\vec p$ and $R>0$ as in Definition \ref{def3.1}, let $r$ be any
  radius in $(0,R)$; we will check Property (i) from Lemma
  \ref{lem3.2}.  The space $\re^q\setminus\{\mathbf0\}$ is
  homeomorphic to the product $S^{q-1}\times(0,1)$, so its
  $(n-1)^{th}$ homotopy group is the same as that of $S^{q-1}$, and
  trivial by hypothesis.  Any continuous map from an $(n-1)$-sphere to
  $\re^q\setminus\{\mathbf0\}$, including the restriction ${\mathbf
  f}|_S$ from Property (i), is homotopic to a constant map.
\end{proof}
\begin{rem}\label{rem3.6}
  The hypothesis on $n$ and $q$ in Lemma \ref{lem3.5} is satisfied for
  $q=2$ and $n\ge3$, or for any pair where $n<q$.
\end{rem}

\begin{thm}\label{thm0.3}
  Let ${\mathbf f}:\Omega\to\re^q$ be a continuous map with an
  isolated zero at $\vec p$ and $R>0$ as in Definition \ref{def3.1}.
  The following are equivalent:
  \begin{itemize}
  \item $\vec p$ is a locally inessential zero of $\mathbf f$;
    \item For any $\epsilon>0$ and any $\rho$ with $0<\rho\le R$,
      there exists a continuous map ${\mathbf g}:\Omega\to\re^q$ such
      that:
  \begin{eqnarray}
    \vec x\in\Omega\setminus B^n(\vec p,\rho)\implies {\mathbf g}(\vec
    x)&=&{\mathbf f}(\vec x),\mbox{\ \ and}\label{eq21}\\ \vec
    x\in\Omega\implies\|{\mathbf g}(\vec x)-{\mathbf f}(\vec
    x)\|&<&\epsilon,\mbox{\ \ and}\label{eq20}\\ V({\mathbf g})\cap
    B^n(\vec p,R)&=&\mbox{\rm{\O}}.\label{eq22}
  \end{eqnarray}
  \end{itemize}
\end{thm}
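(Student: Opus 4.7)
The plan is to prove the two implications separately, using Lemma \ref{lem3.2} to move freely between the equivalent formulations of local inessentiality.

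For the forward (hard) direction, assume $\vec p$ is locally inessential and fix $\epsilon>0$ and $\rho\in(0,R]$. Apply Property (iii) of Lemma \ref{lem3.2} with tolerance $\epsilon/2$ to obtain a threshold $\delta_1>0$; by continuity of $\mathbf f$ at $\vec p$, shrink to some $\delta\le\delta_1$ so that additionally $\|\mathbf f(\vec x)\|<\epsilon/2$ whenever $\|\vec x-\vec p\|<\delta$. Choose any radius $r$ with $0<r<\min\{\rho,\delta\}$ and let $\Psi:S^{n-1}(\vec p,r)\times[0,1]\to B_*^q(\mathbf 0,\epsilon/2)$ be the resulting homotopy, with $\Psi(\cdot,0)=\mathbf f|_S$ and $\Psi(\cdot,1)\equiv\mathbf c\in B_*^q(\mathbf 0,\epsilon/2)$. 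I would then define $\mathbf g$ by cone-like radial extension: on the punctured ball $\overline B^n_*(\vec p,r)$, set
$$\mathbf g(\vec x)=\Psi\!\left(\vec p+r\frac{\vec x-\vec p}{\|\vec x-\vec p\|},\ 1-\frac{\|\vec x-\vec p\|}{r}\right),$$
declare $\mathbf g(\vec p)=\mathbf c$, and put $\mathbf g(\vec x)=\mathbf f(\vec x)$ on $\Omega\setminus\overline B^n(\vec p,r)$. Matching on $S^{n-1}(\vec p,r)$ is automatic because the first clause reduces to $\Psi(\vec x,0)=\mathbf f(\vec x)$ there, and continuity at $\vec p$ follows from $\Psi(\cdot,1)\equiv\mathbf c$. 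The image of $\mathbf g$ on $\overline B^n(\vec p,r)$ is contained in $B_*^q(\mathbf 0,\epsilon/2)$, so $V(\mathbf g)\cap\overline B^n(\vec p,r)=\emptyset$; outside this ball, $\mathbf g=\mathbf f$ has no zero in $B^n(\vec p,R)$ because $\vec p$ is isolated. The agreement condition \eqref{eq21} is immediate from $r<\rho$, and the estimate \eqref{eq20} follows on $\overline B^n(\vec p,r)$ from $\|\mathbf g\|<\epsilon/2$ and $\|\mathbf f\|<\epsilon/2$, and is trivial elsewhere.

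For the reverse direction, assume the approximation property and verify Property (i) of Lemma \ref{lem3.2}. Pick any $\rho\in(0,R)$ (strictly) and any $\epsilon>0$, and take the resulting $\mathbf g$. Since $\mathbf g$ has no zero on the closed ball $\overline B^n(\vec p,\rho)\subset B^n(\vec p,R)$, the radial contraction $H(\vec x,t)=\mathbf g\bigl(\vec p+(1-t)(\vec x-\vec p)\bigr)$ on $S^{n-1}(\vec p,\rho)\times[0,1]$ takes values in $\re^q\setminus\{\mathbf 0\}$, starts at $\mathbf g|_S$, and ends at the constant $\mathbf g(\vec p)$. Because $S^{n-1}(\vec p,\rho)\subset\Omega\setminus B^n(\vec p,\rho)$, we have $\mathbf g|_S=\mathbf f|_S$ on that sphere, so $\mathbf f|_S$ is null-homotopic in $\re^q\setminus\{\mathbf 0\}$, which is Property (i).

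The only substantive step is the forward direction, and even there the main obstacle is already absorbed into Lemma \ref{lem3.2}: once local inessentiality is rephrased as the existence of a null-homotopy into an arbitrarily small punctured ball, the coning construction produces $\mathbf g$ essentially for free, with the triangle inequality delivering the $\epsilon$-bound. No further topological input is required, so the proof is largely a matter of choosing $r$ small enough to simultaneously control $\|\mathbf f\|$, to fit inside $B^n(\vec p,\rho)$, and to remain within the $\delta$-threshold from Lemma \ref{lem3.2}(iii).
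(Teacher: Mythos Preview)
Your proof is correct and follows essentially the same approach as the paper's: both directions use the radial ``coning'' construction, with the forward direction invoking Property~(iii) of Lemma~\ref{lem3.2} to land the homotopy in a small punctured ball and then extending over the disk, and the reverse direction contracting the nonvanishing $\mathbf g$ radially to a constant. The only notable difference is in the converse: the paper verifies Property~(iii) and therefore keeps track of the $\epsilon$-bound on $\|\Psi_\rho\|$, whereas you verify Property~(i), which needs only that the homotopy avoids $\mathbf 0$; your route is slightly cleaner since Lemma~\ref{lem3.2} has already done the work of showing (i)$\iff$(iii).
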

\begin{proof}
  First, assume that $\vec p$ is a locally inessential zero.  Given
  $\epsilon>0$, there is some $\delta_1>0$ corresponding to
  $\frac{\epsilon}2$ in Property (iii) from Lemma \ref{lem3.2}.  Let
  $\delta=\min\{\delta_1,\rho\}$, and denote by ${\mathbf f}|_S$ the
  restriction of $\mathbf f$ to the domain $S^{n-1}(\vec p,\frac{\delta}2)$
  and target $B^q_*({\mathbf0},\frac{\epsilon}2)$.

  By Property (iii) from Lemma \ref{lem3.2}, there exists a homotopy
  \begin{equation}\label{eq0}
    \varphi:S^{n-1}(\vec p,\frac{\delta}2)\times[0,1]\to B^q_*({\mathbf0},\frac{\epsilon}2)
  \end{equation}
  from $\varphi(\vec x,0)={\mathbf f}|_S(\vec x)$ to a constant map
  $\varphi(\vec x,1)\equiv{\mathbf c}$ in
  $B^q_*({\mathbf0},\frac{\epsilon}2)$.  By compactness, $\varphi$
  achieves some minimum magnitude $0<m\le\|\varphi(\vec
  x,t)\|<\frac{\epsilon}2$.  Now ${\mathbf f}|_S$ extends to a
  continuous ${\mathbf {\tilde f}}:\overline{B}^n(\vec
  p,\frac{\delta}2)\to B^q_*({\mathbf0},\frac{\epsilon}2)$.  In the
  interest of giving explicit formulas when we can, the extension
  constructed in \cite{s} \S1.3 is defined by:
  \begin{equation}\nonumber
    {\mathbf {\tilde f}}(\vec x)=\left\{\begin{array}{cc}{\mathbf
    c}&0\le\|\vec x-\vec p\|\le\frac{\delta}4\\\varphi\left(\vec
    p+\frac{\delta}{2\|\vec x-\vec p\|}(\vec x-\vec
    p),2-\frac4{\delta}\|\vec x-\vec
    p\|\right)&\frac{\delta}4\le\|\vec x-\vec
    p\|\le\frac{\delta}2\end{array}\right\},
  \end{equation}
  which is continuous because its two pieces are continuous on closed
  sets and agree on their intersection.

  By construction, for all $\vec x\in\overline{B}^n(\vec
  p,\frac{\delta}2)$, $0<m\le\|{\mathbf {\tilde f}}(\vec
  x)\|<\frac{\epsilon}2$, and for all $\vec x\in S^{n-1}(\vec
  p,\frac{\delta}2)$, ${\mathbf {\tilde f}}(\vec x)={\mathbf
  f}|_S(\vec x)={\mathbf f}(\vec x)$.  Define ${\mathbf
  g}:\Omega\to\re^q$ by ${\mathbf g}(\vec x)={\mathbf f}(\vec x)$ for
  $\|\vec x-\vec p\|\ge\frac{\delta}2$, and ${\mathbf g}(\vec
  x)={\mathbf {\tilde f}}(\vec x)$ for $\|\vec x-\vec
  p\|\le\frac{\delta}2$; then $\mathbf g$ is continuous on $\Omega$,
  and satisfies (\ref{eq21}) and (\ref{eq22}) as claimed.  Further,
  for any $\vec x\in\Omega$, either $\|{\mathbf g}(\vec x)-{\mathbf
  f}(\vec x)\|=0$ or
  \begin{equation*}
  \|{\mathbf g}(\vec x)-{\mathbf f}(\vec x)\|=\|{\mathbf
  {\tilde f}}(\vec x)-{\mathbf f}(\vec x)\|\le\|{\mathbf {\tilde f}}(\vec
  x)\|+\|{\mathbf f}(\vec x)\|<\frac{\epsilon}2+\frac{\epsilon}2=\epsilon.
  \end{equation*}

  Conversely, for any $\epsilon>0$, there is some $\delta$,
  $0<\delta\le R$, so that if $\|\vec x-\vec p\|<\delta$ then
  $\|{\mathbf f}(\vec x)\|<\frac{\epsilon}2$.  Let
  $\rho=\frac{\delta}2$, so there is a continuous $\mathbf g$ with
  $\|{\mathbf g}(\vec x)-{\mathbf f}(\vec x)\|<\frac{\epsilon}2$ for
  all $\vec x\in\Omega$, and ${\mathbf g}(\vec x)={\mathbf f}(\vec x)$
  for all $\vec x\in\Omega\setminus B^n(\vec p,\rho)$.  Let ${\mathbf
  f}|_S$ denote the restriction of $\mathbf f$ to the domain
  $S^{n-1}(\vec p,\rho)$ and target $B^q_*({\mathbf0},\epsilon)$.
  Define $\Psi_\rho:S^{n-1}(\vec
  p,\rho)\times[0,1]\to\re^q\setminus\{{\mathbf0}\}$ by
  $$\Psi_\rho(\vec x,t)={\mathbf g}(\vec p+(1-t)(\vec x-\vec p)).$$
  So, $\Psi_\rho(\vec x,0)={\mathbf g}|_S(\vec x)={\mathbf f}|_S(\vec
  x)$, $\Psi_\rho(\vec x,1)\equiv{\mathbf g}(\vec p)$, and
  \begin{eqnarray*} \|\Psi_\rho(\vec x,t)\|&=&\|{\mathbf g}(\vec
  p+(1-t)(\vec x-\vec p))\|\\ &\le&\|{\mathbf g}(\vec p+(1-t)(\vec
  x-\vec p))-{\mathbf f}(\vec p+(1-t)(\vec x-\vec p))\|+\|{\mathbf
  f}(\vec p+(1-t)(\vec x-\vec p))\|\\
  &<&\frac{\epsilon}2+\frac{\epsilon}2=\epsilon,
  \end{eqnarray*}
  so Property (iii) from Lemma \ref{lem3.2} holds.
\end{proof}

\begin{cor}\label{cor0.4b}
  Let $q$, ${\mathbf f}$, $\vec p$, and $R$ be as in Theorem
  \mbox{\rm{\ref{thm0.3}}}.  If $\vec p$ is a locally inessential zero
  of $\mathbf f$, and, additionally, $\mathbf f$ is smooth, then there
  exists $\mathbf g$ as in Theorem \mbox{\rm{\ref{thm0.3}}}, and which
  is also smooth.
\end{cor}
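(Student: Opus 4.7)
The plan is to bootstrap from the continuous $\mathbf{g}$ produced by Theorem \ref{thm0.3} to a smooth one, using a standard mollification-and-cutoff argument. The key observation enabling this is that $\mathbf{f}$ is already smooth on $\Omega$, and the continuous $\mathbf{g}$ from Theorem \ref{thm0.3} agrees with $\mathbf{f}$ outside a small ball; so only a neighborhood of $\vec p$ needs to be smoothed, while the values of $\mathbf{g}$ are bounded away from $\mathbf0$ there.

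First, I would apply Theorem \ref{thm0.3} with the perturbation parameter $\epsilon/2$ and radius $\rho/2$ to obtain a continuous $\mathbf{g}_0:\Omega\to\re^q$ such that $\mathbf{g}_0=\mathbf{f}$ off $B^n(\vec p,\rho/2)$, $\|\mathbf{g}_0-\mathbf{f}\|<\epsilon/2$ on $\Omega$, and $V(\mathbf{g}_0)\cap B^n(\vec p,R)=\mbox{\O}$. Since $\overline{B}^n(\vec p,\rho)\subseteq B^n(\vec p,R)$ is compact and $\mathbf{g}_0$ is continuous and nonvanishing there, there exists $m>0$ with $\|\mathbf{g}_0(\vec x)\|\ge m$ for all $\vec x\in\overline{B}^n(\vec p,\rho)$.

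Next, I would mollify: choose a standard smooth mollifier $\eta_s$ of radius $s$, and set $\mathbf{g}_s=\mathbf{g}_0*\eta_s$ on a slightly shrunken neighborhood of $\overline{B}^n(\vec p,\rho)$ (one can first extend $\mathbf{g}_0$ to a slightly larger open set by just keeping it equal to $\mathbf{f}$ there). Uniform continuity of $\mathbf{g}_0$ on a compact neighborhood of $\overline{B}^n(\vec p,\rho)$ guarantees that $\mathbf{g}_s\to\mathbf{g}_0$ uniformly as $s\to0$, so I can pick $s$ small enough that $\|\mathbf{g}_s-\mathbf{g}_0\|<\min\{m/2,\epsilon/4\}$ on a neighborhood of $\overline{B}^n(\vec p,\rho)$. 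The map $\mathbf{g}_s$ is smooth on that neighborhood.

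Finally, I would glue $\mathbf{g}_s$ to $\mathbf{f}$ using a smooth cutoff $\chi:\Omega\to[0,1]$ with $\chi\equiv1$ on $\overline{B}^n(\vec p,3\rho/4)$ and $\chi\equiv0$ off $B^n(\vec p,\rho)$, by defining
\begin{equation*}
\mathbf{g}(\vec x)=\chi(\vec x)\,\mathbf{g}_s(\vec x)+\bigl(1-\chi(\vec x)\bigr)\mathbf{f}(\vec x).
\end{equation*}
This $\mathbf{g}$ is smooth on $\Omega$ since both pieces are smooth where they are weighted nonzero. Condition (\ref{eq21}) holds because $\chi$ vanishes off $B^n(\vec p,\rho)$. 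For (\ref{eq20}), on $B^n(\vec p,\rho)$ the triangle inequality gives $\|\mathbf{g}-\mathbf{f}\|\le\|\mathbf{g}_s-\mathbf{g}_0\|+\|\mathbf{g}_0-\mathbf{f}\|<\epsilon/4+\epsilon/2<\epsilon$. For (\ref{eq22}), outside $B^n(\vec p,\rho)$ we have $\mathbf{g}=\mathbf{f}$, which is nonvanishing on $B^n(\vec p,R)\setminus\{\vec p\}$ and hence on $B^n(\vec p,R)\setminus B^n(\vec p,\rho)$; while inside $\overline{B}^n(\vec p,\rho)$, where $\mathbf{f}=\mathbf{g}_0$ holds on the annulus $\rho/2\le\|\vec x-\vec p\|\le\rho$ and $\mathbf{g}$ equals the convex combination $\chi\mathbf{g}_s+(1-\chi)\mathbf{g}_0$ (possibly after observing this identity piece by piece), one gets $\|\mathbf{g}\|\ge\|\mathbf{g}_0\|-\|\mathbf{g}_s-\mathbf{g}_0\|\ge m-m/2=m/2>0$. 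The only mildly delicate point, which is where I expect the most bookkeeping, is verifying the nonvanishing estimate on the transition annulus $3\rho/4\le\|\vec x-\vec p\|\le\rho$, and it is handled cleanly by choosing $\rho/2$ (rather than $\rho$) as the perturbation radius in the opening application of Theorem \ref{thm0.3}, so that $\mathbf{g}_0=\mathbf{f}$ there.
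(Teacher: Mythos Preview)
Your argument is correct and follows essentially the same route as the paper's: obtain a continuous $\mathbf{g}_0$ from Theorem~\ref{thm0.3}, then smooth it near $\vec p$ while preserving a lower bound $\|\mathbf{g}_0\|\ge m$; the paper simply invokes the Whitney Approximation Theorem (\cite{lee}, Ch.~6) as a black box where you carry out the mollification-and-cutoff by hand. One minor bookkeeping point: your inclusion $\overline{B}^n(\vec p,\rho)\subseteq B^n(\vec p,R)$ and the room needed to mollify both fail when $\rho=R$, so either assume $\rho<R$ without loss of generality or shrink the working radii slightly (the paper sidesteps this by working on $\overline{B}^n(\vec p,\delta/2)$ with $\delta\le\rho$ coming from the construction in Theorem~\ref{thm0.3}).
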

\begin{proof}
  Given $\epsilon>0$, apply the construction of Theorem \ref{thm0.3}
  to ${\mathbf f}$ to get a continuous ${\mathbf g}_0$ satisfying
  (\ref{eq20}) with $\frac{\epsilon}2$, and a corresponding $\delta>0$.  By
  construction, for $\vec x\in\overline{B}^n(\vec p,\frac{\delta}2)$,
  $0<m\le\|{\mathbf g}_0(\vec x)\|<\frac{\epsilon}4$, and if $\mathbf
  f$ is smooth ($\cc^\infty$) on $\Omega$, then ${\mathbf g}_0$ is
  equal to a smooth map on the closed set $\Omega\setminus B^n(\vec
  p,\frac{\delta}2)$.  By the Whitney Approximation Theorem (\cite{lee}
  Ch.\ 6), there exists a smooth ${\mathbf g}:\Omega\to\re^q$ such
  that ${\mathbf g}\equiv{\mathbf g}_0\equiv{\mathbf f}$ on
  $\Omega\setminus B^n(\vec p,\frac{\delta}2)$, so (\ref{eq21}) is satisfied,
  and for all $\vec x\in\Omega$,
  $$\|{\mathbf g}(\vec x)-{\mathbf g}_0(\vec x)\|<\frac
  m2<\frac{\epsilon}8.$$ ${\mathbf g}$ is then close to $\mathbf f$
  (satisfying (\ref{eq20})):
  $$\|{\mathbf g}(\vec x)-{\mathbf f}(\vec x)\|\le\|{\mathbf g}(\vec
  x)-{\mathbf g}_0(\vec x)\|+\|{\mathbf g}_0(\vec x)-{\mathbf f}(\vec
  x)\|<\frac{\epsilon}8+\frac{\epsilon}2<\epsilon,$$ and has the
  claimed nonvanishing property (\ref{eq22}):
  \begin{equation}\label{eq16}
    \vec
  x\in B^n(\vec p,\frac{\delta}2)\implies\|{\mathbf g}(\vec x)\|\ge\|{\mathbf g}_0(\vec x)\|-\|{\mathbf
  g}(\vec x)-{\mathbf g}_0(\vec x)\|>m-\frac{m}2.
  \end{equation}
\end{proof}
\begin{cor}\label{cor0.5}
  Let $q$, ${\mathbf f}$, $\vec p$, and $R$ be as in Theorem
  \mbox{\rm{\ref{thm0.3}}}.  If $\vec p$ is an inessential zero of
  $\mathbf f$, and, additionally, $\mathbf f$ is a continuous,
  semialgebraic map, then there exists $\mathbf g$ as in Theorem
  \mbox{\rm{\ref{thm0.3}}}, and which is a continuous, semialgebraic
  map.
\end{cor}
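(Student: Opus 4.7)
The plan is to retrace the proof of Theorem \ref{thm0.3} step by step and verify that every construction can be carried out in the semialgebraic category when $\mathbf f$ is continuous and semialgebraic. That proof produces $\mathbf g$ from three ingredients: a continuous null-homotopy $\varphi$ of $\mathbf f|_S$ into $B^q_*(\mathbf 0,\epsilon/2)$; an explicit extension $\tilde{\mathbf f}$ of $\varphi$ to $\overline{B}^n(\vec p,\delta/2)$ given by a two-piece formula; and the final map $\mathbf g$ built by gluing $\tilde{\mathbf f}$ inside the ball to $\mathbf f$ outside.

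The two gluing steps are automatic once $\varphi$ is semialgebraic. The extension formula from the proof of Theorem \ref{thm0.3} is a composition of $\varphi$ with semialgebraic functions of $\vec x$ (norms, rational expressions, and scalar multiplication), so each branch is continuous and semialgebraic on a closed semialgebraic subset of $\overline{B}^n(\vec p,\delta/2)$, and the two branches agree on $\|\vec x - \vec p\| = \delta/4$; by Proposition \ref{prop1.4}, $\tilde{\mathbf f}$ is continuous and semialgebraic. The same reasoning shows the final $\mathbf g$ is continuous and semialgebraic, with properties (\ref{eq21})--(\ref{eq22}) holding exactly as in Theorem \ref{thm0.3}.

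The essential step is therefore arranging that $\varphi$ itself is semialgebraic. Here $\mathbf f|_S\colon S^{n-1}(\vec p,\delta/2)\to B^q_*(\mathbf 0,\epsilon/2)$ is continuous and semialgebraic, its domain and target are semialgebraic sets, and Property (iii) of Lemma \ref{lem3.2} supplies a topological null-homotopy into $B^q_*(\mathbf 0,\epsilon/2)$. My plan is to appeal to the semialgebraic homotopy theory in \cite{bcr}: two continuous semialgebraic maps between semialgebraic sets that are topologically homotopic are semialgebraically homotopic. Applied to $\mathbf f|_S$ and a constant map with value in $B^q_*(\mathbf 0,\epsilon/2)$, this produces a continuous semialgebraic null-homotopy $\varphi$ of the required form.

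The main obstacle is precisely this appeal, since upgrading a topological homotopy to a semialgebraic one is not elementary. As a fallback, one can construct $\varphi$ by hand via semialgebraic triangulation: after replacing the normalization $\mathbf f|_S/\|\mathbf f|_S\|$ by a simplicial approximation from a triangulation of $S^{n-1}(\vec p,\delta/2)$ to a triangulation of $S^{q-1}$, a simplex-by-simplex null-homotopy can be written down, which is piecewise linear and hence semialgebraic; a final convex combination with $\|\mathbf f|_S\|$ lands the image in $B^q_*(\mathbf 0,\epsilon/2)$. Either way, once $\varphi$ is semialgebraic, the rest of the proof is a direct transcription of the argument from Theorem \ref{thm0.3}.
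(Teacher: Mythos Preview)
Your approach is sound in outline but takes a genuinely different route from the paper's. You propose to rerun the construction of Theorem~\ref{thm0.3} entirely inside the semialgebraic category, which forces you to manufacture a semialgebraic null-homotopy $\varphi$; as you correctly flag, this requires either the comparison theorem from semialgebraic homotopy theory or a triangulation/simplicial-approximation argument, neither of which is elementary. The paper sidesteps this completely: it runs Theorem~\ref{thm0.3} \emph{as is} to obtain a merely continuous $\mathbf g_0$ with $\|\mathbf g_0\|\ge m$ on $\overline{B}^n(\vec p,\delta/2)$, then approximates $\mathbf g_0$ on that closed ball by a polynomial map $\mathbf h$ via the Weierstrass Approximation Theorem (with error $<m/2$), and finally glues $\mathbf h$ to $\mathbf f$ using a piecewise-linear cutoff $\chi(\|\vec x-\vec p\|)$. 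The resulting $\mathbf g=\chi\,\mathbf h+(1-\chi)\,\mathbf f$ is semialgebraic by Proposition~\ref{prop1.4}, and the nonvanishing and closeness estimates are just triangle inequalities against the bounds for $\mathbf g_0$. What the paper's approach buys is self-containment: only classical polynomial approximation is used, and no semialgebraic homotopy machinery enters (Remark~\ref{rem6.3} in fact notes that producing polynomial representatives of homotopy classes is a hard problem, which is precisely what the paper's argument avoids). What your approach buys is conceptual directness --- once the heavy comparison theorem is granted, the proof is a line-by-line transcription --- but at the cost of importing that theorem.
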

\begin{proof}  
  Given $\epsilon>0$, apply the construction of Theorem \ref{thm0.3}
  to ${\mathbf f}$ to get a continuous ${\mathbf g}_0$ satisfying
  (\ref{eq20}) with $\frac{\epsilon}2$, and corresponding $\delta>0$, $m>0$.
  By construction, for $\vec x\in\overline{B}^n(\vec p,\frac{\delta}2)$,
  $\|{\mathbf f}(\vec x)\|<\frac{\epsilon}4$, and $m\le\|{\mathbf
  g}_0(\vec x)\|<\frac{\epsilon}4$.  Also, ${\mathbf f}-{\mathbf g}_0$
  is continuous on $\Omega$, and $\equiv{\mathbf0}$ on $S^{n-1}(\vec p,\frac{\delta}2)$, so by
  compactness, there is some $\delta_1$ so that
  $0<\delta_1<\frac{\delta}2$ and if $\delta_1\le\|\vec x-\vec
  p\|\le\frac{\delta}2$, then $\|{\mathbf f}(\vec x)-{\mathbf g}_0(\vec
  x)\|<\frac{m}2$.

  Apply the Weierstrass Approximation Theorem to ${\mathbf g}_0$ on
  the compact set $\overline{B}^n(\vec p,\frac{\delta}2)$ to get polynomials
  $h_1$, \ldots, $h_q$ so that for $\vec x\in\overline{B}^n(\vec
  p,\frac{\delta}2)$, the map ${\mathbf h}=(h_1,\ldots,h_q)$ satisfies
  \begin{equation}\label{eq32}
    \|{\mathbf h}(\vec x)-{\mathbf g}_0(\vec x)\|<\frac
  m2<\frac{\epsilon}8.
  \end{equation}
  Let $\chi:\re^1\to\re^1$ be a continuous,
  piecewise linear, weakly decreasing cutoff function:
  $$\chi(s)=\left\{\begin{array}{cl}1&s\le\delta_1\\\frac{s-\frac{\delta}2}{\delta_1-\frac{\delta}{2}}&\delta_1\le
  s\le\frac{\delta}2\\0&s\ge\frac{\delta}2\end{array}\right\}.$$
  Define ${\mathbf g}:\Omega\to\re^q$ by: 
  $${\mathbf g}(\vec x)=\chi(\|\vec x-\vec p\|)\cdot{\mathbf h}(\vec
    x)+(1-\chi(\|\vec x-\vec p\|))\cdot{\mathbf f}(\vec x).$$ By
    Proposition \ref{prop1.4}, $\mathbf g$ is semialgebraic, and by
    construction, $\mathbf g$ is continuous and satisfies (\ref{eq21}).

  ${\mathbf g}$ is close to $\mathbf f$ (satisfying (\ref{eq20})) ---
  either ${\mathbf g}(\vec x)={\mathbf f}(\vec x)$, or for $\vec x\in
  B^n(\vec p,\frac{\delta}2)$:
  \begin{eqnarray*}
    \|{\mathbf g}(\vec x)-{\mathbf f}(\vec x)\|&=&\|\chi\cdot({\mathbf
      h}(\vec x)-{\mathbf f}(\vec x))\|\\ &\le&\chi\cdot(\|{\mathbf
      h}(\vec x)-{\mathbf g}_0(\vec x)\|+\|{\mathbf g}_0(\vec
    x)-{\mathbf f}(\vec x))\|)\\ &<&\frac{\epsilon}8+\frac{\epsilon}2.
  \end{eqnarray*}
  $\mathbf g$ has the claimed nonvanishing property (\ref{eq22}); for
  $\vec x\in B^n(\vec p,\delta_1)$: $$\|{\mathbf g}(\vec x)\|=\|{\mathbf
  h}(\vec x)\|\ge\|{\mathbf g}_0(\vec x)\|-\|{\mathbf h}(\vec
  x)-{\mathbf g}_0(\vec x)\|>m-\frac m2,$$ and for $\vec x\in B^n(\vec
  p,\frac{\delta}2)\setminus B^n(\vec p,\delta_1)$:
  \begin{eqnarray*}
    \|{\mathbf g}(\vec x)\|&=&\|\chi\cdot{\mathbf h}(\vec
    x)+(1-\chi)\cdot{\mathbf f}(\vec x)\|\\ &=&\|{\mathbf g}_0(\vec
    x)+\chi\cdot({\mathbf h}(\vec x)-{\mathbf g}_0(\vec
    x))+(1-\chi)\cdot({\mathbf f}(\vec x)-{\mathbf g}_0(\vec
    x))\|\\ &\ge&\|{\mathbf g}_0(\vec x)\|-\chi\cdot\|{\mathbf h}(\vec
    x)-{\mathbf g}_0(\vec x)\|-(1-\chi)\cdot\|{\mathbf f}(\vec
    x)-{\mathbf g}_0(\vec x)\|\\ &>&m-\chi\cdot\frac
    m2-(1-\chi)\cdot\frac m2=\frac m2.
  \end{eqnarray*}
  The magnitude of ${\mathbf g}$ is also bounded above on
  $\overline{B}^n(\vec p,\frac{\delta}2)$:
  \begin{eqnarray}
    \|{\mathbf g}(\vec x)\|&=&\|\chi\cdot{\mathbf h}(\vec
    x)+(1-\chi)\cdot{\mathbf f}(\vec x)\|\nonumber\\
    &\le&\chi\cdot\bigl(\|{\mathbf h}(\vec x)-{\mathbf g}_0(\vec
    x)\|+\|{\mathbf g}_0(\vec x)\|\bigr)+(1-\chi)\cdot\|{\mathbf f}(\vec
    x)\|\nonumber\\
    &<&\chi\cdot\left(\frac{\epsilon}8+\frac{\epsilon}4\right)+(1-\chi)\cdot\frac{\epsilon}4<\frac{\epsilon}2.\label{eq23}
  \end{eqnarray}    
\end{proof}
\begin{rem}\label{rem2.4}
  The constructions of Corollaries \ref{cor0.4b} and \ref{cor0.5} are
  not compatible; it does not immediately follow from them that if
  ${\mathbf f}$ is both smooth and semialgebraic (for example,
  polynomial), then there exists ${\mathbf g}$ which is also both
  smooth and semialgebraic.  An analogue of Corollary \ref{cor0.4b}
  using Weierstrass Approximation could give a polynomial
  approximation to $\mathbf f$, at the cost of losing Property
  (\ref{eq21}) from Theorem \ref{thm0.3} and shrinking the radius $R$
  from (\ref{eq22}).
\end{rem}

\section{Homotopy through nonvanishing maps}\label{sec3}

The goal of this Section is to perturb a map ${\mathbf f}$ that has a
locally inessential zero by a homotopy $F(\vec x,t)$ which has no
nearby zeros for $t>0$.  This is a logically stronger property than
the negation of the following stability property: ``there exists some
$\epsilon>0$ so that for all $t\in[0,\epsilon]$, $F(\vec x,t)$ has a
zero near $\vec p$.''  In the $q=n$ case, such a homotopy through
nonvanishing maps is constructed by \cite{d84}, where ${\mathbf f}$
and $F$ are continuous (and further, invariant under a group action).
The novelty here is the generalization to $q\ne n$, and a construction
explicit enough to work in the semialgebraic category, using Corollary
\ref{cor0.5}.  The main step (\ref{eq2}) in the following Theorem (and
in that of \cite{d84} Proposition 1) is analogous to the well-known
``Alexander's Trick'' in topology (\cite{d}).

\begin{thm}\label{thm0.7}
  Let $q$, ${\mathbf f}$, $\vec p$, and $R$ be as in Theorem
  \mbox{\rm{\ref{thm0.3}}}.  If $\mathbf f$ has a locally
  inessential zero at $\vec p$, then for any $\rho$ with $0<\rho<R$, there
  exist a continuous map ${\mathbf j}:\Omega\to\re^q$ and a continuous
  homotopy $F:\Omega\times[0,1]\to\re^q$ from ${\mathbf f}$ to
  ${\mathbf j}$, such that:
  \begin{itemize}
    \item $F$ fixes the values of ${\mathbf f}$ outside an arbitrarily small ball:
  \begin{eqnarray}
    (\vec x,t)\in\left(\Omega\setminus B^n(\vec
     p,\rho)\right)\times[0,1]\implies F(\vec x,t)&=&{\mathbf f}(\vec
     x),\mbox{\ \ and}\label{eq7}
  \end{eqnarray}
    \item For every non-zero time $t$, $F(\vec x,t)$ is nonvanishing as a
  function of $\vec x$ near $\vec p$:
  \begin{eqnarray}
      t>0\implies V(F(\vec
     x,t))\cap B^n(\vec p,R)&=&\mbox{\rm{\O}}.\label{eq9}
  \end{eqnarray}
  \end{itemize}
  If, additionally, $\mathbf f$ is continuous and semialgebraic, then
  there exist $\mathbf j$ and $F$ as above which are also continuous
  and semialgebraic.
\end{thm}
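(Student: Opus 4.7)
The plan is to obtain the endpoint $\mathbf{j}$ from the preceding nonvanishing approximation results, and then to build $F$ by an explicit Alexander's-trick interpolation that shrinks the region where $F(\cdot,t)$ disagrees with $\mathbf{f}$ down to the single point $\vec p$ as $t\to 0^{+}$.

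First I would apply Theorem \ref{thm0.3} in the continuous case, or Corollary \ref{cor0.5} in the semialgebraic case, with an approximation radius $\rho'<\rho$ and a small tolerance $\epsilon$, to produce a continuous (and semialgebraic, if applicable) map $\mathbf{g}:\Omega\to\re^q$ which agrees with $\mathbf{f}$ outside $B^n(\vec p,\rho')$, is nonvanishing on $\overline{B}^n(\vec p,R)$, and satisfies a uniform positive lower bound $\|\mathbf{g}(\vec x)\|\ge m/2$ on $\overline{B}^n(\vec p,\rho')$ (as provided by the explicit constructions in those proofs). Set $\mathbf{j}=\mathbf{g}$; what remains is the homotopy $F$ joining $\mathbf{f}$ to $\mathbf{g}$.

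Second, I would define $F$ piecewise. Outside $B^n(\vec p,\rho')$, and at $t=0$ everywhere, let $F(\vec x,t)=\mathbf{f}(\vec x)$, so that (\ref{eq7}) is immediate. For $(\vec x,t)\in B^n(\vec p,\rho')\times(0,1]$, split the ball by the time-dependent sphere $\|\vec x-\vec p\|=t\rho'$: in the inner core, use a damped rescaled copy of $\mathbf{g}$, roughly of the form $\mathbf{g}\bigl(\vec p+(\vec x-\vec p)/t\bigr)$ multiplied by a scalar factor tending to $0$ as $t\to 0^{+}$; in the transition annulus, use an explicit radial interpolation built from $\mathbf{f}$ that matches the inner-core value on the inner sphere, matches $\mathbf{f}(\vec x)$ on the outer sphere, and recovers $\mathbf{f}(\vec x)$ at $t=0$. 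All pieces are built from $\mathbf{f},\mathbf{g}$ and the coordinates $(\vec x,t)$ by finitely many algebraic operations, so by Proposition \ref{prop1.4} the resulting $F$ is semialgebraic whenever $\mathbf{f}$ is.

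Third, I would verify the two conditions. Condition (\ref{eq9}) holds because in the inner core $\mathbf{g}$ is nonvanishing with a nonzero damping factor for $t>0$, in the annulus the radial interpolation is engineered to avoid $\mathbf{0}$ using that $\mathbf{f}$ has no zero in $\overline{B}^n_*(\vec p,\rho')$, and outside $B^n(\vec p,\rho')$ the map $F=\mathbf{f}$ has no zero in $\overline{B}^n(\vec p,R)\setminus\{\vec p\}$. The subtle point is joint continuity of $F$ at the corner $(\vec p,0)$: along every approach, either the damping factor tends to $0$ (in the core) or $\mathbf{f}$ is evaluated at a point approaching $\vec p$ (in the annulus), so $F\to\mathbf{f}(\vec p)=\mathbf{0}$. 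The main obstacle is designing the explicit piecewise formula so that all three matching conditions (inner sphere, outer sphere, $t=0$) hold simultaneously with $F$ remaining nonvanishing for $t>0$; this is exactly the analogue of Alexander's trick advertised before the Theorem, generalizing the $q=n$ construction of \cite{d84} to arbitrary $q$.
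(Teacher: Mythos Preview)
Your plan is correct and matches the paper's proof almost exactly: the paper also obtains $\mathbf{g}$ from Theorem~\ref{thm0.3}/Corollary~\ref{cor0.5}, then defines $F$ by an Alexander's-trick formula with an inner core of radius $\sim t^2$ where a radially dilated $\mathbf{g}$ is used, an outer annulus where a radially dilated $\mathbf{f}$ is used, and a piecewise-linear damping factor $\beta_t$ to force continuity at $(\vec p,0)$. The only substantive addition in the paper is that it writes down the explicit piecewise-linear maps $\alpha_t,\beta_t$ (your ``scalar factor'' and ``radial interpolation'') and checks the three cases for continuity at $(\vec 0,0)$ by hand, which is exactly the ``main obstacle'' you flagged.
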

\begin{proof}
  Pick any $\epsilon_1\in(0,1]$ and apply the construction of Theorem
    \ref{thm0.3} (or Corollary \ref{cor0.5} in the semialgebraic case)
    to ${\mathbf f}$ to get a continuous (respectively, semialgebraic)
    ${\mathbf g}:\Omega\to\re^q$ satisfying (\ref{eq20}) and a
    corresponding $\delta$ with $0<\delta<\min\{1,\rho\}$.  By
    construction, for $\vec x\in\overline{B}^n(\vec p,\frac{\delta}2)$,
    $\|{\mathbf f}(\vec x)\|<\frac{\epsilon_1}2$ and $\|{\mathbf g}(\vec
    x)\|<\frac{\epsilon_1}2$ (using (\ref{eq23}) in the semialgebraic case).
    There is also a lower bound on $\overline{B}^n(\vec p,\frac{\delta}2)$,
    $\|{\mathbf g}(\vec x)\|\ge\frac m2$, with $m$ from Theorem
    \ref{thm0.3}, and $\frac m2$ from Corollary \ref{cor0.5}.  After
    this point, we assume $\vec p=\vec0\in\Omega$ just to declutter
    the formulas.

  For each $t\in[0,\frac{\delta}2]$, define continuous, piecewise
  linear functions $\alpha_t:[0,\infty)\to(0,\infty)$ and
  $\beta_t:[0,\infty)\to(0,\infty)$, by:
  \begin{eqnarray}
    \alpha_t(s)&=&\left\{\begin{array}{cl}\frac{\delta}{t^2}&0\le s<\frac{t^2}2\\\left(\frac{1-\frac{\delta}{t^2}}{\frac{t^2}2}\right)(s-t^2)+1&\frac{t^2}2\le s< t^2\\1&t^2\le s\end{array}\right\},\label{eq3}\\
    \beta_t(s)&=&\left\{\begin{array}{cl}\frac{1-\frac{2t}{\delta}}{t}s+\frac{2t}{\delta}&0\le s< t\\ 1&t\le s\end{array}\right\}.\label{eq4}
  \end{eqnarray}
  So $\alpha_t$ is weakly decreasing in $s$, $\beta_t$ is weakly
  increasing, and in particular, for $t=0$,
  $\alpha_0\equiv\beta_0\equiv1$. 

  Based on formulas (\ref{eq3}) and (\ref{eq4}), define the following
  maps:
  \begin{eqnarray}
    \vec\alpha(\vec x,t)&=&\alpha_t(\|\vec x\|)\cdot\vec
    x\label{eq25}\\ \beta(\vec x,t)&=&\beta_{t}(\|\vec
    x\|).\label{eq26}
  \end{eqnarray}
  Both are defined for all $(\vec x,t)\in\re^n\times\re^1$, although
  neither $\vec\alpha$ nor $\beta$ is continuous.  Some elementary
  algebraic expansion of (\ref{eq3}) and (\ref{eq4}) with
  $s=\sqrt{x_1^2+\cdots+x_n^2}$ will show that the graph of
  $\vec\alpha$ is a semialgebraic set in
  $\re^n\times\re^1\times\re^n$, and the graph of $\beta$ is a
  semialgebraic set in $\re^n\times\re^1\times\re^1$.

  Define $F_0:\Omega\times[0,\frac{\delta}2]\to\re^q$ by:
  \begin{eqnarray}
    F_0(\vec x,t)&=&\left\{\begin{array}{cl}\beta_t(\|\vec
    x\|)\cdot{\mathbf g}(\alpha_t(\|\vec x\|)\cdot\vec x)&0\le\|\vec
    x\|<\frac{t^2}2\\\beta_t(\|\vec x\|)\cdot{\mathbf
    f}(\alpha_t(\|\vec x\|)\cdot\vec x)&\frac{t^2}2\le\|\vec
    x\|\end{array}\right\}\label{eq2}\\
    &=&\left\{\begin{array}{cl}\beta(\vec
    x,t)\cdot{\mathbf g}(\vec\alpha(\vec x,t))&0\le\|\vec
    x\|<\frac{t^2}2\\\beta(\vec x,t)\cdot{\mathbf f}(\vec\alpha(\vec
    x,t))&\frac{t^2}2\le\|\vec x\|\end{array}\right\}.\label{eq24}
  \end{eqnarray}
  By construction, $F_0(\vec x,0)={\mathbf f}(\vec x)$ for all
  $x\in\Omega$, and $F_0(\vec x,t)={\mathbf f}(\vec x)$ for all $\vec
  x\in\Omega$ with $\|\vec x\|>\frac{\delta}2$, so $F_0$ satisfies
  (\ref{eq7}).  If $\mathbf f$ and $\mathbf g$ are semialgebraic, then
  the expression (\ref{eq24}) shows that $F_0$ is semialgebraic by
  Proposition \ref{prop1.4}, although it will be easier to work with
  expression (\ref{eq2}).

  If $\|\vec x\|<\frac{t^2}2$, then $\|\alpha_t(\|\vec x\|)\cdot \vec
  x\|<\frac{\delta}{t^2}\cdot\frac{t^2}2=\frac{\delta}2<\delta$, so
  using the previously mentioned lower and upper bounds for
  $\|{\mathbf g}(\vec x)\|$, $$0<\frac{2t}{\delta}\cdot\frac
  m2<\|F_0(\vec x,t)\|=\beta_t(\|\vec x\|)\|{\mathbf
  g}(\alpha_t(\|\vec x\|)\cdot\vec x)\|<\frac{\epsilon_1}{2}.$$
  
   If $\frac{t^2}2\le\|\vec x\|<t^2$, then $\frac{t^2}2\le\|\vec
   x\|\le\|\alpha_t(\|\vec x\|)\cdot \vec x\|<\frac{\delta}{t^2}\cdot
   t^2=\delta$.  If $t^2\le\|\vec x\|<\delta$, then $\|\alpha_t(\|\vec
   x\|)\cdot \vec x\|=\|\vec x\|<\delta$, so
  \begin{eqnarray}
  0&\le&\frac{2t}{\delta}\min\{\|{\mathbf f}(\vec
   x)\|:\frac{t^2}2\le\|\vec x\|<\delta\}\label{eq27}\\
  &\le&\|F_0(\vec
   x,t)\|=\beta_t(\|\vec x\|)\|{\mathbf f}(\alpha_t(\|\vec
   x\|)\cdot\vec x)\|<\frac{\epsilon_1}{2},\label{eq28}
  \end{eqnarray}
  with equality in (\ref{eq27}) only at $t=0$.

  So, the nonvanishing property (\ref{eq9}) holds for $F_0$, together
  with an approximation property analogous to (\ref{eq20}):
  \begin{equation*}
    \|{\mathbf f}(\vec x)-F_0(\vec x,t)\|\le\|{\mathbf f}(\vec
    x)\|+\|F_0(\vec x,t)\|<\frac{\epsilon_1}{2}+\frac{\epsilon_1}{2}.
  \end{equation*}

  The two pieces of the formula (\ref{eq2}) agree on their common
  boundary where $\|\vec x\|=\frac{t^2}2>0$, $\alpha_t(\|\vec
  x\|)=\frac{\delta}{t^2}$, and $\|\alpha_t(\|\vec x\|)\cdot\vec
  x\|=\frac{\delta}2$, so $\alpha_t(\|\vec x\|)\cdot\vec x\in S^{n-1}(\vec
  p,\frac{\delta}2)$.  It follows from the continuity of (\ref{eq3})
  and (\ref{eq4}) in $s>0$ and $t>0$ that (\ref{eq25}), (\ref{eq26}),
  and $F_0$ are continuous for $\vec x\ne\vec0$ and $t>0$.  For a
  point $(\vec x,t)=(\vec0,t_0)$ with $t_0>0$, there is a neighborhood
  where
  $$F_0(\vec x,t)=\left(\frac{1-\frac{2t}{\delta}}{t}\|\vec
  x\|+\frac{2t}{\delta}\right)\cdot{\mathbf
  g}(\frac{\delta}{t^2}\cdot\vec x),$$ which is continuous.  For a
  point $(\vec x,t)=(\vec x_0,0)$ with $\vec x_0\ne\vec0$, there is a
  neighborhood where $$F_0(\vec x,t)=1\cdot{\mathbf f}(1\cdot\vec
  x),$$ which is also continuous.  It remains only to check continuity
  at $(\vec x,t)=(\vec0,0)$, where $F_0(\vec0,0)={\mathbf0}$.

  For any $\epsilon_2>0$, there is some $\delta_1>0$ so that if
  $\|\vec x\|<\delta_1$, then $\|{\mathbf f}(\vec x)\|<\epsilon_2$.
  Let $\delta_2=\min\{\delta,\delta_1\}$, and let
  $\delta_3=\frac{\epsilon_2}{1+\frac2{\delta}}>0$.  Continuity of
  $F_0$ at the origin will follow from showing that if $\|\vec
  x\|<\delta_2$ and $t<\delta_3$, then $\|F_0(\vec x,t)\|<\epsilon_2$.
  There are three cases.

  If $\|\vec x\|<\min\{\frac{t^2}2,\delta_2\}$, then 
  \begin{eqnarray*}
    \|F_0(\vec x,t)\|&=&\left(\frac{1-\frac{2t}{\delta}}{t}\|\vec
  x\|+\frac{2t}{\delta}\right)\|{\mathbf
  g}(\frac{\delta}{t^2}\cdot\vec x)\|\\
  &<&\left(\frac{(1-\frac{2t}{\delta})t}{2}+\frac{2t}{\delta}\right)\frac{\epsilon_1}{2}<\left(\frac{(1-\frac{2t}{\delta})}{2}+\frac{2}{\delta}\right)\delta_3\frac{1}{2}<\epsilon_2.
  \end{eqnarray*}
  If $\frac{t^2}2\le\|\vec x\|<\min\{t^2,\delta_2\}$, then 
  \begin{eqnarray*}
    \|F_0(\vec x,t)\|&=&\left(\frac{1-\frac{2t}{\delta}}{t}\|\vec
  x\|+\frac{2t}{\delta}\right)\|{\mathbf f}(\alpha_t(\|\vec
  x\|)\cdot\vec x)\|\\
  &<&\left((1-\frac{2t}{\delta})t+\frac{2t}{\delta}\right)\frac{\epsilon_1}{2}<\left((1-\frac{2t}{\delta})+\frac{2}{\delta}\right)\delta_3\frac{1}{2}<\epsilon_2.
  \end{eqnarray*}
  If $t^2\le\|\vec x\|<\delta_2$ then $$\|F_0(\vec
  x,t)\|=\beta_t(\|\vec x\|)\|{\mathbf f}(1\cdot\vec
  x)\|<1\epsilon_2$$ So $F_0$ is a continuous homotopy from ${\mathbf
    f}(\vec x)$ to ${\mathbf j}(\vec x)=F_0(\vec x,\frac{\delta}2)$
  for $0<t\le\frac{\delta}2$; re-scaling the $t$ variable to the
  interval $[0,1]$ gives $F$ and ${\mathbf j}$ satisfying
  \mbox{\rm{(\ref{eq7})}} and \mbox{\rm{(\ref{eq9})}} as claimed.
\end{proof}
\begin{rem}\label{rem3.3}
  It would be easy to construct another homotopy from ${\mathbf j}$ to
  a smooth map ${\mathbf j}_2$, by a continuous homotopy of
  nonvanishing approximations.  Concatenation would then give a
  continuous homotopy $F$ from the original map $\mathbf f$ to the
  smooth map ${\mathbf j}_2$.  If ${\mathbf f}$ were also smooth, then
  the existence of a continuous homotopy implies the existence of a
  smooth homotopy $\tilde F$ from $\mathbf f$ to ${\mathbf j}_2$
  (\cite{lee} Theorem 6.29), and Property (\ref{eq7}) could be
  arranged.  However, finding a smooth, or even $\cc^1$, homotopy also
  satisfying (\ref{eq9}) seems to be a difficult problem, which we
  state here as a Question.
\end{rem}
\begin{question}\label{question4.3}
  Let $q$, ${\mathbf f}$, $\vec p$, and $R$ be as in Theorem
  \mbox{\rm{\ref{thm0.3}}}, where $\mathbf f$ has a locally
  inessential zero at $\vec p$.  Suppose ${\mathbf f}$ is [$\cc^1$,
  smooth, real analytic, polynomial] on $\Omega$.  Does there always
  exist a [$\cc^1$, smooth, real analytic, polynomial] map $F:B^n(\vec
  p,\rho)\times(-1,1)\to\re^q$ for some $0<\rho<R$ with the following
  properties?
  \begin{itemize}
    \item $F$ agrees with ${\mathbf f}$ for all $\vec x\in B^n(\vec
      p,\rho)$ at time $t=0$: $F(\vec x,0)={\mathbf f}(\vec x)$, and
  \item for every non-zero time $t$, $F(\vec x,t)$ is nonvanishing as
    a function of $\vec x$: $t\ne0\implies V(F(\vec
    x,t))=\mbox{\rm{\O}}$.
  \end{itemize}
\end{question}
The Question can be considered as a classic extension problem: does a
$\re^q$ valued ${\mathbf f}$ with a locally inessential zero in
$\re^n$ extend, locally, to a $\re^q$ valued function $F$ with an
isolated zero in $\re^{n+1}$, with the same regularity?  Theorem
\ref{thm0.7} constructs such a continuous one-sided extension for
$0\le t\le 1$; then $F(\vec x,t^2)$ for $-1\le t\le1$ is a two-sided
extension.  This also solves the two-sided extension problem in the
continuous, semialgebraic case.

We are not conjecturing an answer either way to any of the stated
versions of Question \ref{question4.3}; it would be interesting to get
either a proof of ``yes'' or a concrete counterexample for ``no'' for
any of the cases.  It is possible that the answer will depend on the
dimensions $n$ and $q$, in analogy with exotic smoothness phenomena.
\begin{cor}\label{cor5.4}
  Given $n$ and $q$, if the answer to Question
  {\rm{\ref{question4.3}}} is ``yes'' in the case where ``real
  analytic'' is stated in both the hypothesis and the conclusion, then
  the answer is also yes in the polynomial case.
\end{cor}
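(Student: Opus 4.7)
The plan is to produce the polynomial extension as a Taylor polynomial of the real-analytic extension that the hypothesis provides. Since the given polynomial ${\mathbf f}$ is in particular real analytic, applying the real-analytic case of Question \ref{question4.3} yields a real analytic map $F:B^n(\vec p,\rho)\times(-1,1)\to\re^q$ with $F(\vec x,0)={\mathbf f}(\vec x)$ and $V(F(\cdot,t))=\emptyset$ for every $t\ne 0$. Because ${\mathbf f}$ has an isolated zero at $\vec p$, combining these two facts forces $V(F)=\{(\vec p,0)\}$ on (a possibly shrunken) $B^n(\vec p,\rho)\times(-1,1)$, so $(\vec p,0)$ is an isolated zero of the real analytic $F$.

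Because $F$ is real analytic with an isolated zero at $(\vec p,0)$, the Łojasiewicz inequality supplies constants $r_0>0$, $C>0$, and $\alpha>0$ with
\begin{equation*}
\|F(\vec x,t)\|\ge C\|(\vec x-\vec p,t)\|^{\alpha}\quad\text{on } B^{n+1}((\vec p,0),r_0).
\end{equation*}
Let $d=\deg{\mathbf f}$, fix any integer $N$ satisfying $N\ge d$ and $N+1>\alpha$, and let $F_N$ be the degree-$N$ Taylor polynomial of $F$ centered at $(\vec p,0)$ in all $n+1$ variables. The Taylor remainder admits a bound $\|F(\vec x,t)-F_N(\vec x,t)\|\le M\|(\vec x-\vec p,t)\|^{N+1}$ on a possibly smaller ball about $(\vec p,0)$, hence
\begin{equation*}
\|F_N(\vec x,t)\|\ge C\|(\vec x-\vec p,t)\|^\alpha - M\|(\vec x-\vec p,t)\|^{N+1} > 0
\end{equation*}
for every $(\vec x,t)\ne(\vec p,0)$ in a sufficiently small ball $B^{n+1}((\vec p,0),r_1)$. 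Moreover, at $t=0$ the Taylor series of $F$ reduces to the Taylor series of ${\mathbf f}$ at $\vec p$, which terminates at degree $d\le N$, so $F_N(\vec x,0)={\mathbf f}(\vec x)$ identically.

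To finish, choose $\rho'>0$ and $T>0$ small enough that $B^n(\vec p,\rho')\times(-T,T)\subseteq B^{n+1}((\vec p,0),r_1)$, and replace $F_N(\vec x,t)$ by the polynomial $F_N(\vec x,Tt)$ so that the domain takes the required form $B^n(\vec p,\rho')\times(-1,1)$. This is the desired polynomial map. The essential ingredient is the Łojasiewicz estimate, which converts the isolated-zero property of the real analytic $F$ into a quantitative lower bound of polynomial growth type; once this is available, choosing the truncation order $N$ large enough to simultaneously dominate $\alpha$ and exactly reproduce ${\mathbf f}$ at $t=0$ is routine, and the rest is bookkeeping on the radius.
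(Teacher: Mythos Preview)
Your proof is correct and follows essentially the same approach as the paper: apply the real analytic case to the polynomial $\mathbf f$, invoke the \L ojasiewicz inequality to get a polynomial-type lower bound on $\|F\|$, then take a Taylor polynomial of degree at least $\max\{\alpha,\deg\mathbf f\}$ so that it both reproduces $\mathbf f$ at $t=0$ and inherits the isolated zero from the remainder estimate, finishing with a rescaling of $t$. The paper's proof is slightly terser but the ideas and steps are identical.
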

\begin{proof}
  Assume as before that $\vec p=\vec0$.  Suppose that ${\mathbf
    f}(\vec{x})$ as in Question \ref{question4.3} is a polynomial map
  with an isolated zero at $\vec 0$, which extends to a real analytic
  $F(\vec{x},t)$ on some $B^n(\vec0,\rho)\times(-1,1)$ with an
  isolated zero at $(\vec 0,0)$.  By the \L ojasiewicz inequality,
  there are some positive constants $C_1$ and $\eta$ so that in some
  possibly smaller neighborhood of $(\vec0,0)$, $$||F(\vec x,t)||\ge
  C_1\|(\vec x,t)\|^\eta.$$ Choose an integer
  $D\ge\max\{\eta,\mbox{degree}({\mathbf f})\}$; then $F$ has a degree
  $D$ Taylor polynomial $P(\vec x,t)$ that satisfies $P(\vec
  x,0)={\mathbf f}(\vec x)$, and there is some constant $C_2>0$ so
  that in some neighborhood of the origin, $\|F(\vec x,t)-P(\vec
  x,t)\|\le C_2\|(\vec x,t)\|^{D+1}$.  So, $P$ also has a unique zero
  in a sufficiently small neighborhood of $(\vec0,0)$:
\begin{eqnarray*}
  \|P(\vec x,t)\|&\ge&||F(\vec x,t)||-\|F(\vec x,t)-P(\vec
  x,t)\|\\ &\ge&C_1\|(\vec x,t)\|^\eta-C_2\|(\vec
  x,t)\|^{D+1}\\ &=&C_1\|(\vec
  x,t)\|^\eta\left(1-\frac{C_2}{C_1}\|(\vec
  x,t)\|^{D+1-\eta}\right).
\end{eqnarray*}
The $t$ variable can be re-scaled as needed.
\end{proof}

The converse of Theorem \ref{thm0.7} also holds (in the continuous
case), so this gives a sixth equivalent property for Lemma
\ref{lem3.2}.
\begin{thm}\label{4.5}
  For a continuous map ${\mathbf f}:\Omega\to\re^q$, and an isolated
  zero $\vec p\in B^n(\vec p,R)$ as in Definition \ref{def3.1}, the
  following are equivalent:
  \begin{itemize}
    \item $\vec p$ is a locally inessential zero of $\mathbf f$;
    \item There exist some $r_0\in(0,R)$ and a continuous $F:B^n(\vec
      p,r_0)\times[0,1]\to\re^q$ such that $F(\vec x,0)={\mathbf
        f}(\vec x)$ for $\vec x\in B^n(\vec p,r_0)$ and $t>0\implies
      F(\vec x,t)\ne{\mathbf0}$.
  \end{itemize}
\end{thm}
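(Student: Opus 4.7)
The forward direction is already handled by Theorem \ref{thm0.7}: if $\vec p$ is a locally inessential zero, then with $r_0=\rho$ for any $\rho\in(0,R)$, the homotopy $F$ constructed there is continuous on $B^n(\vec p,\rho)\times[0,1]$, equals $\mathbf f$ at $t=0$ by virtue of (\ref{eq7}) at $t=0$, and is nonvanishing on $B^n(\vec p,R)$ for $t>0$ by (\ref{eq9}). So the plan is to concentrate on the converse direction, deducing local inessentiality from the existence of such an $F$.

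For the converse, the strategy is to verify Property (i) of Lemma \ref{lem3.2}. Pick any $r$ with $0<r<\min\{r_0,R\}$; since $\vec p$ is an isolated zero in $B^n(\vec p,R)$, the restriction $\mathbf f|_S$ to $S^{n-1}(\vec p,r)$ takes values in $\re^q\setminus\{\mathbf 0\}$. The plan is to build a null-homotopy of $\mathbf f|_S$ in $\re^q\setminus\{\mathbf 0\}$ by concatenating two homotopies: one that uses $F$ to slide $\mathbf f|_S$ across to $F(\cdot,1)|_S$, and another that contracts $F(\cdot,1)|_S$ inside the contractible ball on which $F(\cdot,1)$ is nonvanishing.

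For the first homotopy, set $\Phi_1(\vec x,s)=F(\vec x,s)$ for $(\vec x,s)\in S^{n-1}(\vec p,r)\times[0,1]$. By hypothesis $\Phi_1(\vec x,0)=\mathbf f(\vec x)$, $\Phi_1(\vec x,1)=F(\vec x,1)$, and $\Phi_1$ is continuous; moreover $\Phi_1(\vec x,s)\ne\mathbf 0$ because for $s>0$ this is given, while for $s=0$ it follows from $r<R$. For the second homotopy, observe that $F(\cdot,1)\colon B^n(\vec p,r_0)\to\re^q\setminus\{\mathbf 0\}$ is continuous, so the straight-line retraction $\Phi_2(\vec x,s)=F(\vec p+(1-s)(\vec x-\vec p),1)$ on $S^{n-1}(\vec p,r)\times[0,1]$ is a continuous homotopy in $\re^q\setminus\{\mathbf 0\}$ from $F(\cdot,1)|_S$ to the constant map $\Phi_2(\vec x,1)\equiv F(\vec p,1)\ne\mathbf 0$. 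Concatenating $\Phi_1$ with $\Phi_2$ (reparametrizing $s\mapsto 2s$ on $[0,\tfrac12]$ and $s\mapsto 2s-1$ on $[\tfrac12,1]$) produces a continuous homotopy $S^{n-1}(\vec p,r)\times[0,1]\to\re^q\setminus\{\mathbf 0\}$ from $\mathbf f|_S$ to a constant. This is exactly Property (i) of Lemma \ref{lem3.2}, so $\vec p$ is a locally inessential zero by Definition \ref{def3.2}.

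There is no real obstacle here: each of the two homotopies is an essentially formal construction, and the only subtle points are keeping the nonvanishing condition intact across the $t=0$ slice (handled by choosing $r<R$) and making sure the underlying domain of $F(\cdot,1)$ contains the closed ball $\overline{B}^n(\vec p,r)$ (handled by choosing $r<r_0$). The concatenation is continuous because $\Phi_1(\vec x,1)=F(\vec x,1)=\Phi_2(\vec x,0)$, so no gluing lemma beyond the standard pasting on closed sets is required.
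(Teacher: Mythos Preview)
Your argument is correct. One small citation point in the forward direction: equation (\ref{eq7}) only asserts $F=\mathbf f$ \emph{outside} $B^n(\vec p,\rho)$; the identity $F(\vec x,0)=\mathbf f(\vec x)$ on all of $\Omega$ comes instead from the phrase ``homotopy $F\ldots$ from $\mathbf f$ to $\mathbf j$'' in the statement of Theorem \ref{thm0.7}.

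For the converse you take a somewhat different route from the paper. You verify Property (i) of Lemma \ref{lem3.2} on a sphere $S^{n-1}(\vec p,r)$ by concatenating two homotopies --- first advance time from $0$ to $1$, then contract radially at time $1$ --- whereas the paper verifies Property (iv) directly on a punctured ball via the single ``diagonal'' homotopy $\Theta_r(\vec x,t)=F\bigl(\vec p+(1-t)(\vec x-\vec p),\tfrac{\delta}{2}t\bigr)$, together with an $\epsilon$--$\delta$ argument to keep the image inside $B^q_*(\mathbf 0,\epsilon)$. Your approach is a bit more elementary, since Property (i) needs no size control on the target and no quantifier over $\epsilon$; the paper's approach is more compact (one formula rather than a concatenation) and delivers the localized conclusion (iv) without appealing back to the equivalences of Lemma \ref{lem3.2}. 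Both rest on the same underlying fact that $F$ is nonvanishing on $\overline{B}^n(\vec p,r)\times(0,1]$.
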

\begin{proof}
  The existence of such a homotopy $F$ implies Property (iv) from
  Lemma \ref{lem3.2}, as follows.  For any $\epsilon>0$, there is some
  $\delta$ with $0<\delta\le\min\{1,r_0\}$ and so that if $\|\vec
  x-\vec p\|<\delta$ and $0\le t<\delta$, then $\|F(\vec
  x,t)\|<\epsilon$.  For any $r\in(0,\delta)$, define
  $\Theta_r:B^n_*(\vec p,r)\times[0,1]\to\re^q\setminus\{{\mathbf0}\}$
  by $\Theta_r(\vec x,t)=F(\vec p+(1-t)\cdot(\vec x-\vec
  p),\frac{\delta}2t)$.  Then, $\Theta_r(\vec x,0)=F(\vec
  x,0)={\mathbf f}(\vec x)$, $\Theta_r(\vec x,1)\equiv F(\vec
  p,\frac{\delta}2)$, and $\Theta_r(\vec x,t)\in
  B^q_*({\mathbf0},\epsilon)$ by construction.
\end{proof}
  
\section{Real analytic maps to the plane}\label{sec4}

The following Theorem gives an answer to a version of Question
\ref{question4.3} in the special case where the target dimension is
$q=2$ and the given data ${\mathbf f}$ is real analytic on
$\Omega\subseteq\re^n$, for any $n\ge1$.  The construction of a
H\"older continuous extension $F$ which is real analytic except at the
origin, where it satisfies a one-point Lipschitz condition $\|F(\vec
x,t)\|\le C\|(\vec x-\vec p,t)\|$, does not use the Theorems of
Sections \ref{sec2} or \ref{sec3}.  One step refers to Lemma
\ref{lem5.3}, which appears after the main Proof.
\begin{thm}\label{thm5.1}
  Let ${\mathbf f}:\Omega\to\re^2$ be a real analytic map so that
  $\mathbf f$ has an isolated zero $\vec p\in\Omega$.  If either:
  \begin{itemize}
    \item $n\ne2$; or,
    \item $n=2$ and $\vec p$ is a locally inessential zero of $\mathbf
      f$,
  \end{itemize}
  then there exist some $\rho>0$, and a H\"older continuous map
  $F:B^{n+1}((\vec p,0),\rho)\to\re^2$ such that:
  \begin{itemize}
  \item  $F$ locally extends $\mathbf f$: $F(\vec x,0)={\mathbf f}(\vec x)$
    for $\vec x\in B^{n}(\vec p,\rho)$, and
  \item on the punctured ball $B^{n+1}_*((\vec p,0),\rho)$, $F$ is real
   analytic and nonvanishing, and
  \item there is some $C>0$ so that for all $(\vec x,t)\in
    B^{n+1}_*((\vec p,0),\rho)$,
    \begin{equation}\label{eq31}
      \|F(\vec x,t)\|\le C\|(\vec x-\vec p,t)\|.
    \end{equation}
  \end{itemize}
\end{thm}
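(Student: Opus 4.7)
The strategy is to identify $\re^{2}\cong\co$ so that $\mathbf{f}$ becomes a complex-valued real analytic map $f=f_{1}+if_{2}$ with an isolated zero at $\vec p$, and then to construct $F$ in the form $F=e^{H}$, where $H\colon B^{n+1}_{*}((\vec p,0),\rho)\to\co$ is a real analytic function whose restriction to the equatorial slice $\{t=0\}$ agrees with $\log f$. If $H$ can be built with such a trace, then $F=e^{H}$ is automatically real analytic and nonvanishing on the punctured $(n+1)$-ball, and the growth inequality (\ref{eq31}) translates to an upper bound $\rp(H)(\vec x,t)\le\log C+\log\|(\vec x-\vec p,t)\|$. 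After normalizing $\vec p=\vec 0$ and choosing $\rho_{0}$ so that $f$ is nonvanishing on $\overline{B^{n}}(\vec 0,\rho_{0})\setminus\{\vec 0\}$, the hypothesis on $(n,q)$---either $n\ne 2$ together with Remark \ref{rem3.6}, or the locally inessential assumption when $n=2$---ensures via Lemma \ref{lem3.2} that $f/|f|$ lifts through the universal covering $\re\to S^{1}$ to a real-analytic single-valued $\theta\colon B^{n}_{*}(\vec 0,\rho_{0})\to\re$ with $f=|f|e^{i\theta}$. I set $h(\vec x):=\log|f(\vec x)|+i\theta(\vec x)$, which is real analytic on the punctured disk; by the \L ojasiewicz inequality applied to $|f|^{2}$ there exist $k\ge 1$ and $c>0$ so that $\rp(h)(\vec x)\ge k\log\|\vec x\|-C_{1}$ near $\vec 0$, i.e., $h$ has at worst a logarithmic singularity at the origin.

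To produce $H$ I split $H=H_{\mathrm{sing}}+H_{\mathrm{reg}}$, where $H_{\mathrm{sing}}$ is an explicit complex-valued harmonic function on $B^{n+1}_{*}((\vec 0,0),\rho)$ carrying the logarithmic singularity of $h$ at the origin---for instance a multiple of the Newtonian fundamental solution together with an explicit piece matching the leading angular behavior of $\theta$---chosen so that $h-H_{\mathrm{sing}}|_{t=0}$ extends to a H\"older continuous function on the closed equatorial disk. I then build $H_{\mathrm{reg}}$ via the classical Dirichlet problem on the upper half-ball $B^{n+1}_{+}((\vec 0,0),\rho)$ with H\"older continuous boundary data equal to $h-H_{\mathrm{sing}}|_{t=0}$ on the equatorial disk and a chosen H\"older extension on the upper hemisphere; Schauder theory provides a harmonic solution that is H\"older continuous up to the closed half-ball and real analytic in the interior. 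Symmetrically reflecting across $\{t=0\}$ and gluing to the analogous solution on the lower half-ball (with the same boundary data on the equatorial disk) then assembles $H$ as a function continuous on the punctured $(n+1)$-ball and harmonic on each open half.

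The subtlest point is to upgrade continuity across $\{t=0\}$ to genuine real analyticity away from $(\vec 0,0)$, and this is where I expect Lemma \ref{lem5.3} to enter: because the boundary data for $H_{\mathrm{reg}}$ on the equatorial disk is real analytic away from $\vec 0$, Lemma \ref{lem5.3} should supply---in the spirit of a Schwarz-type reflection principle, or equivalently Cauchy--Kovalevskaya applied to the Poisson extension---that $H_{\mathrm{reg}}$ extends real analytically across the equator. With $H$ real analytic on $B^{n+1}_{*}((\vec 0,0),\rho)$ in hand, define $F:=e^{H}$ and $F(\vec 0,0):=\vec 0$; then $F$ is real analytic and nonvanishing on the punctured ball, $F(\vec x,0)=e^{h(\vec x)}=f(\vec x)$, and the pointwise estimate $\|F(\vec x,t)\|=e^{\rp(H)(\vec x,t)}\le C\|(\vec x,t)\|^{k}\le C\|(\vec x,t)\|$ yields both (\ref{eq31}) and Lipschitz (hence H\"older) continuity at $(\vec 0,0)$. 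The principal obstacle is the interplay between the interior-slice condition $H|_{t=0}=h$---not a standard Dirichlet condition on a smooth domain---and the requirement of real analyticity across that same slice in the presence of a logarithmic singularity at one point of it; handling this interaction is the work of Lemma \ref{lem5.3} and is the ingredient beyond classical Schauder theory.
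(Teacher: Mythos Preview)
Your outline has the right general flavor---lift $f$ through $\exp$, extend into the extra variable, recover $F$---but there are two genuine gaps, and a misidentification of Lemma~\ref{lem5.3}.

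First, the splitting $H=H_{\mathrm{sing}}+H_{\mathrm{reg}}$ cannot work as stated. You want $H_{\mathrm{sing}}$ harmonic on $B^{n+1}_{*}((\vec0,0),\rho)$ with trace on $\{t=0\}$ absorbing the singularity of $h=\log f$, so that $h-H_{\mathrm{sing}}|_{t=0}$ is H\"older. But for $n\ge2$ a harmonic function on a punctured $(n{+}1)$-ball with an isolated non-removable singularity behaves near the origin like a multiple of the Newtonian kernel $\|(\vec x,t)\|^{-(n-1)}$, not like $\log\|(\vec x,t)\|$. Its trace on $\{t=0\}$ therefore blows up like $\|\vec x\|^{-(n-1)}$, which is incompatible with the merely logarithmic singularity $\rp(h)\sim k\log\|\vec x\|$ coming from the \L ojasiewicz inequality. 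No such $H_{\mathrm{sing}}$ exists, so the Dirichlet data for $H_{\mathrm{reg}}$ is still singular at $\vec0$ and Schauder theory does not apply.

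Second, you have not controlled $\theta=\ip(h)$. The lift exists by the locally inessential hypothesis, but nothing prevents $\theta$ from being unbounded as $\vec x\to\vec0$; your ``explicit piece matching the leading angular behavior of $\theta$'' is not specified, and in fact there is no obvious model. The paper handles this by a separate argument (Weierstrass Preparation along a well-chosen axis) to prove $\theta$ is bounded, which is essential.

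Finally, Lemma~\ref{lem5.3} is not a reflection or Cauchy--Kovalevskaya statement; it says that if $\mathbf g^{k}$ is Lipschitz and $\rp(\mathbf g)\ge0$, then $\mathbf g$ is $\cc^{0,1/k}$. The paper's route is quite different from yours: after normalizing so that $\mathbf m\cdot\mathbf f\equiv1$ on a boundary sphere and proving $\theta$ bounded, it takes a $k^{\text{th}}$ root $(\mathbf m\cdot\mathbf f)^{1/k}$ with $\rp\ge0$, which by Lemma~\ref{lem5.3} is H\"older up to the origin. This gives \emph{bounded} H\"older Dirichlet data on the closed half-ball, so the classical Dirichlet solution $H$ exists, is H\"older to the boundary, and has $\rp(H)>0$ in the interior by the maximum principle---this is what forces nonvanishing. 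Then $F=(H(\vec x,t^{2}))^{k}/\mathbf m(\vec x)$. The logarithmic singularity is thus never extended directly; it is tamed by the root, and the $k^{\text{th}}$ power is restored only at the end.
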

\begin{proof}
  Let $\vec p=\vec0$ and let $R>0$, as in Theorem \ref{thm0.3}.
  Consider $\mathbf f$ as a complex valued function ${\mathbf f}(\vec
  x)=(f_1(\vec x),f_2(\vec x))=f_1+if_2$.

  Case 1.  We first consider the case $n=1$, where there is a simple
  proof and a stronger result, neither of which we have been able to
  generalize to higher $n$ or $q$.  By the real analytic assumption,
  there is some $\rho_0>0$ so that for $|x_1|<\rho_0$,
  $g(x_1)=f_1(x_1)+if_2(x_1)$ is equal to a non-constant convergent
  series with complex coefficients
  ${\displaystyle{\sum_{k=1}^\infty(a_k+ib_k)x_1^k}}$.  Replacing the
  real variable $x_1$ with a complex variable $z=x_1+it$ gives, for
  the same radius $|z|<\rho_0$, a series converging to a holomorphic
  function $F(z)$ on $B^2((0,0),\rho_0)\subseteq\co$ which extends
  $g=f_1+if_2$ and whose zeros are all isolated.

  Case 2.  $n\ge2$.  The first three steps in the proof are
  preparation steps for $\mathbf f$ in a small neighborhood; Step 4
  constructs a $k^{th}$ root, and the extension is constructed in the
  remaining steps.  The locally inessential property is used only in
  Step 1, although as mentioned in Remark \ref{rem3.6}, any isolated
  zero is locally inessential in the $n\ne2$ case, by Lemma
  \ref{lem3.5}.

  Step 1.  (Normalization on a boundary sphere) Using Property (iv)
  from Lemma \ref{lem3.2}, there exists some
  $0<\rho_1<\min\{R,\frac12\}$ so that ${\mathbf f}$ restricted to
  $B^n_*(\vec0,\rho_1)\to\co\setminus\{0+0i\}$ is homotopic in
  $\co\setminus\{0+0i\}$ to the constant map with image $\{1+0i\}$.
  By the usual construction of the universal covering space
  $\exp:\co\to\co\setminus\{0+0i\}$, there is a branch of the
  logarithm and a lift of $\mathbf f$ (\cite{s} Theorem 2.4.5) to a
  real analytic composite $\log\circ{\mathbf
    f}:B^n_*(\vec0,\rho_1)\to\co$ so that $\exp(\log({\mathbf f}(\vec
  x)))={\mathbf f}(\vec x)$.  For any $0<\rho_2<\rho_1$, the
  restriction of $\log\circ{\mathbf f}$ to the sphere
  $S^{n-1}(\vec0,\rho_2)$ is real analytic, and solving the classical
  Dirichlet problem gives a function ${\mathbf u}(\vec x)$ which is
  harmonic and real analytic on some ball $B^n(\vec0,\rho_3)$,
  $\rho_2<\rho_3<\rho_1$, and such that ${\mathbf u}(\vec
  x)=\log({\mathbf f}(\vec x))$ for $\vec x\in S^{n-1}(\vec0,\rho_2)$.
  Let ${\mathbf m}(\vec x)=\exp(-{\mathbf u}(\vec x))$, so by
  construction, the complex product ${\mathbf m}\cdot{\mathbf f}$ is
  real analytic on $B^n(\vec0,\rho_3)$, ${\mathbf m}(\vec x){\mathbf
    f}(\vec x)\equiv1+0i$ for all $\vec x\in S^{n-1}(\vec0,\rho_2)$,
  $({\mathbf m}\cdot{\mathbf f})^{-1}(\{0+0i\})=\{\vec0\}$, and for $n=2$,
  ${\mathbf m}\cdot{\mathbf f}$ still has a locally inessential zero.

  Step 2.  (Polar coordinates) Using the exponential covering space
  again with base point 
  \begin{equation}\label{eq29}
    ({\mathbf m}\cdot{\mathbf
  f})(\rho_2,0,\ldots,0)=1+0i,
  \end{equation}
  there is a branch of the logarithm on
  $B^n_*(\vec0,\rho_3)$ with real and imaginary parts:
  $$\log(({\mathbf m}\cdot{\mathbf f})(\vec x))=\ln|({\mathbf
  m}\cdot{\mathbf f})(\vec x)|+i\theta(\vec x),$$ where
  $\theta(\rho_2,0,\ldots,0)=0$.  Exponentiating, $$({\mathbf
  m}\cdot{\mathbf f})(\vec x)=r(\vec x)\cdot\exp(i\theta(\vec x)),$$
  for $\theta$ and $r>0$ real analytic on
  $B^n_*(\vec0,\rho_3)$.

  Step 3.  (Boundedness of $\theta$) Using (\ref{eq29}), the real
  analytic function $\rp({\mathbf m}\cdot{\mathbf f})$ is not
  identically zero on any open interval of the $x_1$-axis
  $\{(x_1,0,\ldots,0)\}\cap B^n(\vec0,\rho_3)$.  So, by the Weierstrass
  Preparation Theorem, there exist some radius $0<\rho_4<\rho_2$, some
  degree $N$, and some real analytic functions $\nu$, $c_1$, \ldots,
  $c_N$, so that for all $\vec x\in B^n(\vec0,\rho_4)$, 
  $$\rp({\mathbf m}(\vec x){\mathbf f}(\vec x))=\nu(\vec
  x)\cdot(x_1^N+c_1(x_2,\ldots,x_n)x_1^{N-1}+\ldots+c_N(x_2,\ldots,x_n)),$$
  where $\nu(\vec x)$ is nonvanishing.  Denote the open cylinder
  $$\Gamma=\left(-\frac{\rho_4}2,\frac{\rho_4}2\right)\times
  B^{n-1}(\vec0,\frac{\rho_4}2)\subseteq B^n(\vec0,\rho_4).$$ On the
  sphere $S^{n-1}(\vec0,\rho_2)$, $\theta\equiv0$, and on the complement
  ${\overline B}^n(\vec0,\rho_2)\setminus \Gamma$, $\theta$ is bounded by
  compactness: $|\theta|\le K$.  For a fixed
  $x^\prime=(x_2^\prime,\ldots,x_n^\prime)\in
  B^{n-1}(\vec0,\frac{\rho_4}2)$, consider the following expression as
  a function of $x_1$ only:
  \begin{eqnarray*}
  \rp({\mathbf m}(x_1,x^\prime){\mathbf
    f}(x_1,x^\prime))&=&\nu(x_1,x^\prime)\left(x_1^N+c_1(x^\prime)x_1^{N-1}+\ldots+c_N(x^\prime)\right)\\ &=&|{\mathbf
    m}(x_1,x^\prime){\mathbf
    f}(x_1,x^\prime)|\cos(\theta(x_1,x^\prime)) \ \ \ \ \mbox{(for
    $(x_1,x^\prime)\ne\vec0$)}.
  \end{eqnarray*}
  By the Fundamental Theorem of Algebra, for a fixed $x^\prime\in
  B^{n-1}_*((0,\ldots,0),\frac{\rho_4}2)$,
  $\cos(\theta(x_1,x^\prime))$ can have at most $N$ zeros on the
  interval $-\frac{\rho_4}2<x_1<\frac{\rho_4}2$, so
  $|\theta(x_1,x^\prime)|$ is bounded by $K+2\pi N$ for all $x_1$ with
  $(x_1,x^\prime)\in{\overline B}^n(\vec0,\rho_2)$, and this bound does
  not depend on $x^\prime$.  For points on the $x_1$ axis, where
  $x_1\ne0$ and $x^\prime=(0,\ldots,0)$, $|\theta|$ has the same bound
  by continuity.  By continuity at points on the boundary, there is
  some radius $\rho_5$, $\rho_2<\rho_5<\rho_3$, so that $\theta$ is
  bounded on ${\overline B}^n_*(\vec0,\rho_5)$.

  Step 4.  (Taking a root) By the boundedness of $\theta$, there is
  some integer $k$ so that $\frac{|\theta(\vec x)|}{k}<\frac{\pi}2$ for $\vec
  x\in{\overline B}^n_*(\vec0,\rho_5)$.  The following
  $k^{th}$ root is well-defined and continuous on ${\overline
  B}^n(\vec0,\rho_5)$:
  $$({\mathbf m}(\vec x){\mathbf f}(\vec
  x))^{1/k}=\left\{\begin{array}{cl}(r(\vec x))^{1/k}\exp(i\theta(\vec
  x)/k)&\vec x\ne\vec0\\0&\vec x=\vec0\end{array}\right\},$$ and
  satisfies 
  \begin{equation}\label{eq30}
    \rp\left(({\mathbf m}(\vec x){\mathbf f}(\vec x))^{1/k}\right)\ge0.
  \end{equation}
  In the open set $B^n_*(\vec0,\rho_5)$, $({\mathbf m}(\vec x){\mathbf f}(\vec
  x))^{1/k}$ is real analytic and nonvanishing (with positive real
  part), and is $\equiv1+0i$ on $S^{n-1}(\vec0,\rho_2)$.

  Because ${\mathbf m}\cdot{\mathbf f}$ is real analytic on
  $B^n(\vec0,\rho_3)$, its components have bounded gradient on the
  closed ball ${\overline B}^n(\vec0,\rho_5)$ and ${\mathbf
  m}\cdot{\mathbf f}$ satisfies a uniform Lipschitz condition on
  $B^n(\vec0,\rho_5)$.  Lemma \ref{lem5.3} applies, to show that
  $({\mathbf m}(\vec x){\mathbf f}(\vec x))^{1/k}$ is H\"older
  continuous on $B^n(\vec0,\rho_5)$ with exponent $\frac1k$.

  Step 5.  (Solving a boundary value problem) Now consider $\re^{n+1}$
  with coordinates $(\vec x,t)$, and the closed ball ${\overline
  B}^n(\vec0,\rho_2)\times\{0\}$ as the equatorial disk of the closed
  ball ${\overline B}^{n+1}((\vec0,0),\rho_2)$.  Let
  $S^n_+((\vec0,0),\rho_2)$ denote the upper hemisphere as in Notation
  \ref{not0.1}, so that the boundary of the upper half ball
  $B^{n+1}_+((\vec0,0),\rho_2)$ is the union
  $S^n_+((\vec0,0),\rho_2)\cup({\overline
  B}^n(\vec0,\rho_2)\times\{0\})$.  On this boundary set, the
  following function is continuous:
  $${\mathbf h}(\vec x,t)=\left\{\begin{array}{cl}({\mathbf m}(\vec
  x){\mathbf f}(\vec x))^{1/k}&\vec x\in {\overline
    B}^n(\vec0,\rho_2),\ t=0\\1+0i&(\vec x,t)\in
  S^n_+((\vec0,0),\rho_2)\end{array}\right\}.$$ Solving the classical
  Dirichlet problem extends $\mathbf h$ to a complex valued continuous
  function $H(\vec x,t)$ on the closure of
  $B^{n+1}_+((\vec0,0),\rho_2)$ such that $H(\vec x,t)$ is harmonic on
  $B^{n+1}_+((\vec0,0),\rho_2)$.  By the maximum principle applied to
  the harmonic real function $-\rp(H(\vec x,t))$ and (\ref{eq30}),
  $\rp(H(\vec x,t))$ is strictly positive on the interior and attains
  its minimum value $0$ only at the origin.  Near boundary points
  $(\vec x,0)\in B^n_*(\vec0,\rho_2)\times\{0\}$, $H$ extends uniquely
  and real analytically across the boundary into the lower half space
  $\{t<0\}$.  For any $\rho_6$ with $0<\rho_6<\rho_2$, the restriction
  of $H(\vec x,t)$ to the closure of the smaller half ball
  $B^{n+1}_+((\vec0,0),\rho_6)$ is H\"older continuous with the same
  exponent, $\alpha=\frac1k$, as the data on the flat part of the
  boundary.

  Step 6.  (Constructing the extension) The composite $H_2(\vec
  x,t)=H(\vec x,t^2)$ is defined and H\"older continuous on the whole
  ball $B^{n+1}((\vec0,0),\rho_6)$: using $\rho_6<\frac12$,
  \begin{eqnarray*}
    |H_2(\vec y,s)-H_2(\vec x,t)|&=&|H(\vec y,s^2)-H(\vec
    x,t^2)|\\ &\le&C_3\|(\vec y,s^2)-(\vec x,t^2)\|^{1/k}\le
    C_3\left[\|\vec x-\vec
      y\|^2+|s^2-t^2|^2\right]^{1/(2k)}\\ &\le&C_3\left[\|\vec x-\vec
      y\|^2+|s-t|^2\right]^{1/(2k)}\le C_3\|(\vec y,s)-(\vec
    x,t)\|^{1/k}.
  \end{eqnarray*}
  $H(\vec x,t^2)$ is real analytic except at the origin, at which the
  H\"older condition gives, for all $(\vec x,t)$ in
  $B^{n+1}((\vec0,0),\rho_6)$:
  \begin{equation}\label{eq35}
  |H_2(\vec x,t)|=|H(\vec x,t^2)|\le C_3\|(\vec x,t^2)\|^{1/k}\le
  C_3\|(\vec x,t)\|^{1/k}.
  \end{equation}
  The $k^{th}$ power $(H(\vec x,t^2))^k$ is similarly H\"older
  continuous with the same exponent $\frac1k$ on the same ball, and
  real analytic except at the origin, and from (\ref{eq35}), it
  satisfies:
  $$\left|(H(\vec x,t^2))^k\right|\le C_3^k\|(\vec x,t)\|.$$ By
  construction, $(H(\vec x,t^2))^k$ has a unique zero at $(\vec0,0)$,
  and for $t=0$, $(H(\vec x,0))^k={\mathbf m}(\vec x){\mathbf f}(\vec
  x)$.  An extension $F$ of ${\mathbf f}$ as claimed, with
  $\rho=\rho_6$ and satisfying (\ref{eq31}), is: $$F(\vec
  x,t)=\frac{(H(\vec x,t^2))^k}{{\mathbf m}(\vec x)}.$$
\end{proof}
Such an extension $F$ may vanish to higher order but would still not
necessarily have a continuous derivative at the origin; the above
argument also does not show that $F$ is (uniformly) Lipschitz
continuous in a neighborhood of the origin.
\begin{rem}\label{rem5.2}
  The Proof of Theorem \ref{thm5.1} used some facts about harmonic
  functions that are well-known to PDE experts.  The existence of a
  solution of the Dirichlet problem, to construct $\mathbf u$ in Step
  1, and to construct $H$ in Step 5, is given by Theorem 2.14 of
  \cite{gt}, using only that the boundary data is continuous and the
  domain has a sufficiently regular boundary.  The maximum principle
  (which was the key step for the nonvanishing) holds for any bounded
  domain.  The fact that $\mathbf u$ and $H$ extend real analytically
  across the boundary in neighborhoods where the boundary and the
  Dirichlet data are real analytic follows from a standard argument
  using the Cauchy-Kovalevskaya Theorem and the reflection principle
  for harmonic functions (\cite{g}).  The $\cc^{0,\alpha}$ H\"older
  property for the harmonic function $H(\vec x,t)$ up to a part of the
  boundary where the boundary values are H\"older continuous, from
  Step 5, is the deepest result used in the Proof; it also depends, in
  general, on the geometry of the boundary (\cite{aikawa}, \cite{lu},
  \cite{m}).
\end{rem}

\begin{lem}\label{lem5.3}
  Given $R>0$, integers $n\ge1$, $k\ge1$, and a continuous function
  ${\mathbf g}:B^n(\vec0,R)\to\co$ such that
  ${\mbox{\rm{$\rp$}}}({\mathbf g}(\vec x))\ge0$, with equality only
  at ${\mathbf g}(\vec 0)=0+0i$, if the {\mbox{$k^{th}$}} power
  $({\mathbf g}(\vec x))^k$ is Lipschitz continuous on $B^n(\vec0,R)$:
  for some $C_1$ and any $\vec x,\vec y\in B^n(\vec0,R)$,
  $$\left|({\mathbf g}(\vec y))^k-({\mathbf g}(\vec x))^k\right|\le
  C_1\|\vec y-\vec x\|,$$ then $\mathbf g$ is H\"older continuous on
  $B^n(\vec0,R)$: for some $C_2$ and any $\vec x,\vec y\in
  B^n(\vec0,R)$, 
  \begin{equation}\label{eq34}
    \left|{\mathbf g}(\vec y)-{\mathbf g}(\vec
  x)\right|\le C_2\|\vec y-\vec x\|^{1/k}.
  \end{equation}
\end{lem}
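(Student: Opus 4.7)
The plan is to reduce the Hölder estimate to a one-dimensional question along the straight line segment joining $\vec x$ to $\vec y$, and then split into two cases according to whether $|\mathbf g|$ is small or large compared with $\|\vec y-\vec x\|^{1/k}$. Write $\delta=\|\vec y-\vec x\|$ and $\gamma(s)=\vec x+s(\vec y-\vec x)$ for $s\in[0,1]$; since $B^n(\vec 0,R)$ is convex, $\gamma$ stays in the ball. Set $G(s)=\mathbf g(\gamma(s))$ and $H(s)=G(s)^k=\mathbf g(\gamma(s))^k$. Then $G$ is continuous with $\rp(G(s))\ge 0$, and $H$ is Lipschitz in $s$ with constant $L:=C_1\delta$. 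It suffices to bound $|G(1)-G(0)|$ by $C(k)\,L^{1/k}$, since then \mbox{(\ref{eq34})} holds with $C_2=C(k)\,C_1^{1/k}$.

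Fix the threshold $\tau:=2L^{1/k}$. If $\max\{|G(0)|,|G(1)|\}\le\tau$, the triangle inequality immediately gives $|G(1)-G(0)|\le 2\tau$. Otherwise, without loss of generality $|G(0)|>\tau$, so $|H(0)|>2^kL$. The Lipschitz estimate on $H$ yields $|H(s)-H(0)|\le L$ for every $s\in[0,1]$, hence $|H(s)|\ge|H(0)|/2>0$ throughout. Thus $H$, and so $G$, is nonvanishing on the segment; by the hypothesis that $\rp(\mathbf g)=0$ only at $\vec 0$, we may write $G(s)=r(s)e^{i\theta(s)}$ continuously with $r(s)>0$ and $\theta(s)\in(-\pi/2,\pi/2)$, and then $\Theta(s):=k\theta(s)$ is a continuous lift of $\arg H(s)$.

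Two standard one-variable estimates then finish this case. For the radial part, $r(s)\ge(|H(0)|/2)^{1/k}$ is comparable to $|G(0)|$, so applying the Mean Value Theorem to $x\mapsto x^k$ gives
\[
|r(1)-r(0)|\le \frac{|H(1)-H(0)|}{k\,\xi^{k-1}}\le \frac{C_k\,L}{|G(0)|^{k-1}}.
\]
For the angular part, $H$ is absolutely continuous with $|H|\ge|H(0)|/2$, so
\[
|\Theta(1)-\Theta(0)|\le\int_0^1\frac{|H'(s)|}{|H(s)|}\,ds\le\frac{2L}{|G(0)|^k},
\]
and $|\theta(1)-\theta(0)|\le 2L/(k|G(0)|^k)$. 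By the law of cosines combined with $1-\cos x\le x^2/2$,
\[
|G(1)-G(0)|^2\le (r(1)-r(0))^2+r(1)r(0)(\theta(1)-\theta(0))^2.
\]
Both terms are $O(L^2/|G(0)|^{2k-2})$, and substituting the Case B hypothesis $|G(0)|^{2k-2}>\tau^{2k-2}=2^{2k-2}L^{(2k-2)/k}$ converts this into $|G(1)-G(0)|^2\le C_k\,L^{2/k}$, completing the proof.

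I expect the main obstacle to be the angular bound. No purely algebraic inequality of the form $|a-b|\le C_k|a^k-b^k|^{1/k}$ can hold for arbitrary $a,b$ in the closed right half-plane once $k\ge 3$: the pair $a=e^{i\pi/3}$, $b=e^{-i\pi/3}$ already satisfies $a^3=b^3=-1$ while $a\ne b$. What rules out such obstructions is that continuity of $\mathbf g$ together with $\rp(\mathbf g)\ge 0$ pins the continuous lift $\theta$ inside $(-\pi/2,\pi/2)$, so any "winding" of $H$ around the origin must be paid for by the Lipschitz cost of $H$; this is precisely what the logarithmic integral $\int_0^1|H'|/|H|$ quantifies in Case B.
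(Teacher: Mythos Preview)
Your argument is correct and complete; in particular the threshold split automatically handles the edge case where the segment passes through $\vec 0$, since then $H$ vanishes somewhere and the Lipschitz bound forces $|G(0)|,|G(1)|\le L^{1/k}<\tau$.

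The paper proceeds quite differently. Instead of a large/small dichotomy and a logarithmic-derivative bound on the angle, it first proves the purely algebraic inequality $|w-z|^k\le|w^k-z^k|$ whenever $|{\rm Arg}(z/w)|<\pi/(2k)$, then uses the Intermediate Value Theorem on the continuous argument $\theta\in(-\pi/2,\pi/2)$ to subdivide the segment into at most $2k$ pieces on each of which the argument changes by less than $\pi/(2k)$; the algebraic inequality applies to each piece and the triangle inequality reassembles them. This is more elementary in that it avoids absolute continuity and integration entirely, and it yields the explicit constant $C_2=2kC_1^{1/k}$. Your approach, by contrast, treats the whole segment at once and is closer in spirit to a PDE-style estimate; it also makes transparent \emph{why} no pointwise inequality $|a-b|\le C_k|a^k-b^k|^{1/k}$ can suffice (your $a=e^{\pm i\pi/3}$ example), and exactly where the Lipschitz hypothesis on $\mathbf g^k$ along the segment, rather than just at the endpoints, is consumed.
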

\begin{proof}
  For $z\in\co\setminus\{0+0i\}$, denote by ${\rm{Arg}}(z)$ the angle
  $\vartheta\in(-\pi,\pi]$ so that $z=|z|e^{i\vartheta}$.

  Step 1.  Claim: For $z,w\in\co\setminus\{0+0i\}$, if
  $|{\rm{Arg}}(\frac zw)|<\frac{\pi}{2k}$ then $|w-z|^k\le|w^k-z^k|$,
  with equality only if $k=1$ or $z=w$.  The Claim is trivial for
  $k=1$; for $k>1$, the proof of the Claim has two cases.

  Case 1. $|z|\le|w|$.  Let $\zeta=\frac zw$.  By the hypothesis
  $|{\rm{Arg}}(\zeta)|<\frac{\pi}{2k}$, for $j=0,1,\ldots,k-1$,
  $\rp(\zeta^j)>0$, so $\rp(1+\zeta+\cdots+\zeta^{k-1})>1$ and
  $\left|1+\zeta+\cdots+\zeta^{k-1}\right|>1$.
  $|{\rm{Arg}}(\zeta)|<\frac{\pi}{2k}$ and $|\zeta|<1$ also imply
  $|1-\zeta|<1$, so
  \begin{equation}\label{eq33}
    |1-\zeta|^k\le|1-\zeta|\le|1-\zeta|\left|1+\zeta+\cdots+\zeta^{k-1}\right|=|1-\zeta^k|,
  \end{equation}
  with equality only if $\zeta=1$.  Multiplying both sides by $|w|^k$
  establishes the Claim.

  Case 2.  $|w|<|z|$.  Let $\zeta=\frac wz$, so
  $|{\rm{Arg}}(\zeta)|<\frac{\pi}{2k}$ and $|\zeta|<1$ still hold, so
  (\ref{eq33}) follows, and then multiplying by $|z|^k$ establishes
  the Claim.

  Step 2.  The property (\ref{eq34}) clearly holds for $\vec x=\vec y$
  or $k=1$, so the following cases will assume $k>1$ and $\vec
  x\ne\vec y$.

  Case 1.  For $\vec x=\vec0$, where ${\mathbf g}(\vec x)=0+0i$, the
  conclusion $|{\mathbf g}(\vec y)|\le C_2\|\vec y\|^{1/k}$ follows
  from the assumption $|({\mathbf g}(\vec y))^k-(0+0i)^k|\le C_1\|\vec
  y-\vec0\|$, with $C_2=C_1^{1/k}$.  The case $\vec y=\vec0$ is
  analogous.

  Case 2.  If $\vec y=\lambda\vec x$ for some
  $\lambda<0$, so that $\vec0$ is between $\vec x$ and $\vec y$ in
  $B^n(\vec0,R)$, then $\|\vec x\|<\|\vec y-\vec x\|$ and $\|\vec
  y\|<\|\vec y-\vec x\|$.  Then using Case 1., $$|{\mathbf g}(\vec
  y)-{\mathbf g}(\vec x)|\le|{\mathbf g}(\vec y)|+|{\mathbf g}(\vec
  x)|\le C_1^{1/k}\|\vec y\|^{1/k}+C_1^{1/k}\|\vec
  x\|^{1/k}<2C_1^{1/k}\|\vec y-\vec x\|^{1/k}.$$

  For the remaining cases, with $\vec x,\vec y\in B_*^n(\vec0,R)$, let
  $z={\mathbf g}(\vec x)$ and $w={\mathbf g}(\vec y)$, so $\rp(z)>0$
  and $\rp(w)>0$ by hypothesis, and ${\rm{Arg}}(\frac
  zw)\in(-\pi,\pi)$.

  Case 3.  If $|{\rm{Arg}}(\frac zw)|<\frac{\pi}{2k}$, then the Claim
  from Step 1.\ applies and $$\left|{\mathbf g}(\vec y)-{\mathbf
  g}(\vec x)\right|^k\le\left|({\mathbf g}(\vec y))^k-({\mathbf
  g}(\vec x))^k\right|\le C_1\|\vec y-\vec x\|,$$ which gives
  (\ref{eq34}) with $C_2=C_1^{1/k}$.

  Case 4.  Suppose $|{\rm{Arg}}(\frac zw)|\ge\frac{\pi}{2k}$ and the
  line segment connecting $\vec x$ to $\vec y$ does not meet the
  origin: $\sigma:[0,1]\to B^n_*(\vec0,R)$, $\sigma(0)=\vec x$,
  $\sigma(1)=\vec y$.  Then ${\rm{Arg}}\circ{\mathbf
    g}\circ\sigma:[0,1]\to(-\frac{\pi}2,\frac{\pi}2)$ is well-defined
  and continuous, and by the Intermediate Value Theorem, there exist
  $2k+1$ points $t_0=0<t_1<t_2<\ldots<t_{2k}=1$ so that
  $\left|{\rm{Arg}}({\mathbf g}(\sigma(t_{j})))-{\rm{Arg}}({\mathbf
    g}(\sigma(t_{j-1})))\right|<\frac{\pi}{2k}$, so by Case 3.,
  $$\left|{\mathbf g}(\sigma(t_{j}))-{\mathbf
  g}(\sigma(t_{j-1}))\right|\le C_1^{1/k}\|\sigma(t_{j})-\sigma(t_{j-1})\|^{1/k}< C_1^{1/k}\|\vec
  y-\vec x\|^{1/k}.$$ Then (\ref{eq34}) follows, with
  $C_2=2kC_1^{1/k}$:
  $$|{\mathbf g}(\vec y)-{\mathbf g}(\vec
  x)|=\left|\sum_{j=1}^{2k}{\mathbf g}(\sigma(t_{j}))-{\mathbf
    g}(\sigma(t_{j-1}))\right|\le\sum_{j=1}^{2k}\left|{\mathbf
    g}(\sigma(t_{j}))-{\mathbf g}(\sigma(t_{j-1}))\right|<
  2kC_1^{1/k}\|\vec y-\vec x\|^{1/k}.$$

  The above four Cases show that for $k>1$, (\ref{eq34}) is satisfied
  for any $\vec x$, $\vec y$ by choosing the maximum constant
  $C_2=2kC_1^{1/k}$.
\end{proof}

\section{Polynomial examples}\label{sec5}
As mentioned in the Introduction, the $n=3$, $q=2$ case of an isolated
zero of $\mathbf f$ can be viewed as an isolated point $\vec p$ in the
intersection of two real surfaces $V(f_1)\cap V(f_2)$ in $\re^3$,
which generically would meet in a space curve.  In applications of
geometry, it may be of interest to define a
space curve implicitly by two polynomials, and then to remove any
isolated points, which can be done by a continuous, semialgebraic
homotopy as in Theorem \ref{thm0.7}, or by a homotopy as in Theorem
\ref{thm5.1} which is real analytic for $(\vec x,t)$ near but not
equal to $(\vec p,0)$.
\begin{example}\label{ex0.5}
  For $n=3$, $q=2$, where $\pi_2(S^1)\cong\{0\}$, consider the
  following pair of polynomials in $\re^3$, so that the varieties $V(f_1)$
  and $V(f_2)$ meet only at $\vec0$:
  \begin{align*}
    f_1(x,y,z) &= 8x^2+8y^2-z^2 & &  \text{Cone  }\\
    f_2(x,y,z) &= z(x^2+y^2)-x^3 & & \text{Cartan Umbrella}
  \end{align*}
  Corollary \ref{cor0.5}, applied to ${\mathbf f}=(f_1,f_2)$, shows
  that there exists some semialgebraic $\mathbf g$ close to ${\mathbf
  f}$ so that $V({\mathbf g})=\mbox{\O}$.  However, in this case, it
  is not possible to construct $\mathbf g$ by merely translating
  the varieties:
  $$V(f_1(\vec x-\vec\tau_1), f_2(\vec x-\vec\tau_2))\ne\mbox{\O},$$
  nor by choosing other level sets:
  $$V(f_1+C_1,f_2+C_2)\ne\mbox{\O}$$ for any constants $C_1$, $C_2$.
  Theorem \ref{thm0.7} also applies, to show that there exists a
  continuous, semialgebraic homotopy that removes the isolated point
  of intersection, and analogously, Theorem \ref{thm5.1} also applies.
  For this example, there is such a homotopy given globally by
  polynomials: $F=(F_1,F_2):\re^3\times\re^1\to\re^2$:
   \begin{eqnarray*}
    F_1(x,y,z,t)&=&8x^2+8y^2-z^2+t^2\\
    F_2(x,y,z,t)&=&z(x^2+y^2)+zt^2-x^3.
  \end{eqnarray*}
\end{example}
\begin{example}\label{ex0.6}
  For $n=4$, $q=3$, $\pi_3(S^2)\not\cong\{0\}$.  A map not homotopic
  to a constant is given by the restriction of this polynomial map
  $\co^2=\re^4\to\re^3$:
  $${\mathbf f}(z_1,z_2)=(2z_1\z_2,|z_1|^2-|z_2|^2)$$ to the unit
  sphere $S^3(\vec0,1)=\{|z_1|^2+|z_2|^2=1\}$.  The restriction is the
  Hopf map (\cite{ac} Example 4.6; \cite{ac2} \S8.2), and this map
  $\mathbf f$ satisfies $V({\mathbf f})=\{\vec0\}$.  The homogeneous
  map ${\mathbf f}$ induces the Hopf map on any sphere, and so for any
  sphere $S^3(\vec0,R)$, there is an $\epsilon > 0$, such that there
  does not exist even a continuous ${\mathbf g}$, nowhere zero inside
  the sphere and $\epsilon$-close to ${\mathbf f}$ on $S^3(\vec0,R)$.
\end{example}
\begin{rem}\label{rem6.3}
  The Proofs in Section \ref{sec2} were not constructive, in that the
  notion of locally inessential merely asserts the existence of a
  homotopy, for example $\varphi$ in (\ref{eq0}), and then we appealed
  to the Weierstrass Approximation Theorem to get $\mathbf h$ in
  (\ref{eq32}).  It should be noted that finding polynomial
  representatives of homotopy classes is a difficult problem with a
  long history, see \cite{baum}, \cite{wood}.  In fact, one of the
  questions raised by \cite{baum} is on the existence of polynomial
  maps with isolated zeros.
\end{rem}

\end{document}